\numberwithin{equation}{section}
\theoremstyle{plain}
\newtheorem{theorem}{Theorem}[section]
\newtheorem{proposition}[theorem]{Proposition}
\newtheorem{lemma}[theorem]{Lemma}
\newtheorem{corollary}[theorem]{Corollary}
\theoremstyle{remark}
\newtheorem{remark}[theorem]{Remark}
\newtheorem{assumption}[theorem]{Assumption}
\theoremstyle{definition}
\newcommand{\gap}{\hspace{0.1cm}}
\newcommand{\vertiii}[1]{{\left\vert\kern-0.25ex\left\vert\kern-0.25ex\left\vert #1 \right\vert\kern-0.25ex\right\vert\kern-0.25ex\right\vert}}
\newcommand{\bb}{\mathbf{b}}
\newcommand{\bn}{\mathbf{n}}
\newcommand{\bt}{\mathbf{t}}
\newcommand{\bu}{\mathbf{u}}
\newcommand{\bv}{\mathbf{v}}
\newcommand{\bx}{\mathbf{x}}
\newcommand{\bB}{\mathbf{B}}
\newcommand{\bD}{\mathbf{D}}
\newcommand{\bV}{\mathbf{V}}
\newcommand{\bdelta}{\bm{\delta}}
\newcommand{\bepsilon}{\bm{\epsilon}}
\newcommand{\bsigma}{\bm{\sigma}}
\newcommand{\btau}{\bm{\tau}}
\newcommand{\cD}{\mathcal{D}}
\newcommand{\cT}{\mathcal{T}}
\newcommand{\cP}{\mathcal{P}}
\newcommand{\cQ}{\mathcal{Q}}
\newcommand{\intO}{\int_{\Omega}}
\let\div\relax
\DeclareMathOperator{\div}{div}
\DeclareMathOperator{\proj}{proj}
\journal{arXiv}
\begin{document}

\begin{frontmatter}
\title{A variational framework for the strain-smoothed element method\tnoteref{acknowledgement}}
\tnotetext[acknowledgement]{
Chaemin Lee's work was supported by the BK21 FOUR program through the National Research Foundation of Korea~(NRF) funded by the Ministry of Education, and
Jongho Park's work was supported by Basic Science Research Program through NRF funded by the Ministry of Education~(2019R1A6A1A10073887). }

\author[CL]{Chaemin Lee}
\address[CL]{Department of Mechanical Engineering, KAIST, Daejeon 34141, Korea}
\ead{ghi9000@kaist.ac.kr}

\author[JP]{Jongho Park\corref{cor}}
\ead{jongho.park@kaist.ac.kr}
\ead[url]{https://sites.google.com/view/jonghopark}
\address[JP]{Natural Science Research Institute, KAIST, Daejeon 34141, Korea}

\cortext[cor]{Corresponding author}

\begin{abstract}
This paper is devoted to a rigorous mathematical foundation for the convergence properties of the strain-smoothed element~(SSE) method.
The SSE method has demonstrated improved convergence behaviors compared to other strain smoothing methods through various numerical examples; however, there has been no theoretical evidence for the convergence behavior.
A unique feature of the SSE method is the construction of smoothed strain fields within elements by fully unifying the strains of adjacent elements.
Owing to this feature, convergence analysis is required, which is different from other existing strain smoothing methods.
In this paper, we first propose a novel mixed variational principle wherein the SSE method can be interpreted as a Galerkin approximation of that.
The proposed variational principle is a generalization of the well-known Hu--Washizu variational principle; thus, various existing strain smoothing methods can be expressed in terms of the proposed variational principle.
With a unified view of the SSE method and other existing methods through the proposed variational principle, we analyze the convergence behavior of the SSE method and explain the reason for the improved performance compared to other methods.
We also present numerical experiments that support our theoretical results.
\end{abstract}


\begin{keyword}
Finite element analysis \sep Strain-smoothed element method \sep Variational principle \sep Convergence analysis
\MSC[2020] 74B05 \sep 74S05 \sep 65N30 \sep 49S05
\end{keyword}
\end{frontmatter}


\section{Introduction}
\label{Sec:Introduction}
The finite element method~(FEM) has developed into one of the most powerful numerical methods for solving problems in engineering and mathematical models. 
The method can solve many important physical problems such as solid mechanics, fluid dynamics, heat transfer, and multi-physics problems. 
For several decades, substantial efforts have been made to develop low-order finite elements exhibiting high accuracy in coarse meshes.
Low-order elements have high modeling capabilities and are particularly preferred for large deformation analyses requiring automatic remeshing. 
In addition, they often provide a relatively easy way to solve complicated engineering problems such as contact analysis~\cite{Hughes:2000,Bathe:1996,LN:2010}. 

There have been numerous attempts to develop more effective finite element methods. 
One major attempt is the assumed strain methods wherein the standard discrete gradient operator is replaced with an assumed form~\cite{Hughes:1980,STP:1985,BB:1986,WI:1990,IW:1991}. 
These methods effectively alleviate locking in finite elements and can be formulated within the framework of the Hu--Washizu variational principle~\cite{SH:1986}. 
The partition of unity finite element method~(PU-FEM)~\cite{MB:1996,BM:1997}, generalized FEM~(G-FEM)~\cite{SBC:2000}, and extended FEM~(X-FEM)~\cite{BB:1999,MDB:1999} are also good examples. 
These methods extend the approximation space by introducing special enrichment functions associated with the problem of interest and additional degrees of freedom. 

We can improve the performance of finite elements without using additional degrees of freedom through strain smoothing. 
The strain smoothing technique was first proposed by Chen et al.~\cite{CWYY:2001}. 
Subsequently, it was extended to a generalized form allowing for discontinuous displacement functions~\cite{Liu:2008}, and this formed the theoretical basis for the smoothed point interpolation method~(S-PIM)~\cite{LZDWZLH:2005,LJCZZ:2011,LLY:2018,LL2:2019,YCL:2020}. 
Liu et al.\ proposed a series of smoothed finite element methods~(S-FEMs) with different smoothing domains~\cite{LDN:2007,LNNL:2009,LNL:2009,NVRN:2010,VNC:2013,NBO:2015,LAHKB:2017,FOBN:2017,NPNK:2016,KLKI:2018,SHCI:2013,JSI:2016,NNCVN:2016,CNN:2017,YWZJF:2019,JYYC:2020,HMX:2020}.      
The smoothing domains can be configured based on edges, nodes, and cells, and piecewise constant strain fields are constructed for the smoothing domains. 
Each of the S-FEMs has attractive properties.
The edge-based S-FEM~(ES-FEM) generally shows the best convergence behavior among them~\cite{LNL:2009}, the node-based S-FEM~(NS-FEM) alleviates volumetric locking~\cite{LNNL:2009}, and the cell-based S-FEM~(CS-FEM) has been applied in various studies because of its convenience and effectiveness~\cite{LDN:2007,NBO:2015,KLKI:2018,SHCI:2013,JSI:2016}.
Theoretical studies on S-FEMs were conducted, and a variational framework was established based on either the Hellinger--Reissner or Hu--Washizu variational principle~\cite{LN:2010,NBN:2008,LNN:2010}.
One may refer to~\cite{ZL:2018} for a recent survey on the class of S-FEMs. 

Recently, a new strain smoothing method known as the strain-smoothed element (SSE) method was developed to improve the predictive capability of low-order elements~\cite{LL:2018,LL:2019,LKL:2021}. 
Using the SSE method, smoothed strain fields are constructed for elements, not for the smoothing domains, and strains of all adjacent elements are fully utilized for strain smoothing. 
Therefore, the SSE method has an advantage over other existing methods in that it does not require additional grids for implementation.
It has been successfully applied to 3-node triangular and 4-node tetrahedral solid elements~\cite{LL:2018}, 4-node quadrilateral solid element~\cite{LKL:2021} and the 3-node mixed interpolation of tensorial components~(MITC3+) shell element~\cite{LL:2019}.
Various numerical experiments have verified that the strain-smoothed elements yield highly accurate solutions compared with other competitive elements.

Till date, the properties of the SSE method have only been numerically verified.
This paper is devoted to the theoretical aspects of the SSE method: convergence analysis, underlying variational principle, and an explanation for faster convergence compared to conventional FEMs.
First, we note that there have been many fruitful results for theoretical studies on strain smoothing methods~\cite{LN:2010,NBN:2008,LNN:2010,Liu:2009,Liu:2010,CLL:2016}.
A strain smoothing method with stabilized conforming nodal integration was analyzed based on the Hu--Washizu variational principle in~\cite{NBN:2008}.
In~\cite{LNN:2010}, the strain smoothing procedure of S-FEMs is shown as an orthogonal projection between the assumed strain spaces.
Using this fact, S-FEMs are analyzed in terms of the Hellinger--Reissner variational framework.
In~\cite{Liu:2009,Liu:2010,CLL:2016}, the G space theory that presents a unified analysis for various strain smoothing procedures was established.
However, the abovementioned results are not directly applicable to the SSE method owing to the rather complicated structure of the method.
The smoothed strain field of the SSE method is constructed by a particular strain smoothing followed by additional pointwise assignment to Gaussian points and interpolation within an element~\cite{LL:2018,LL:2019,LKL:2021}.
Owing to its features, the method can neither be analyzed in terms of the Hu--Washizu variational principle as in~\cite{NBN:2008} nor  be interpreted in the G space theory.
In particular, the strain smoothing step of the SSE method cannot be expressed as an orthogonal projection between the assumed strain spaces, as in~\cite{LNN:2010}.
Therefore, a new theory is required to explain the convergence of the SSE method.

We first observe that the SSE method can be cast into an equivalent formulation whose strain smoothing step is a composition of orthogonal projection operators among the assumed strain spaces.
More precisely, we demonstrate that the smoothed strain of the method can be obtained by applying a sequence of orthogonal projection operators from the assumed strain spaces corresponding to coarser meshes to those corresponding to finer meshes.
By invoking this observation, we herein construct a mixed variational principle that can derive the SSE method as a conforming Galerkin approximation.
The constructed variational principle naturally generalizes the Hu--Washizu variational principle so that it can provide a unified convergence analysis of the standard FEM, the S-FEM, and the SSE method. 
Applying the standard convergence theory for mixed FEMs~\cite{BS:2008,BBF:2013} to the constructed variational principle yields a unifying convergence theorem for these methods, and the improved performance of the SSE method compared to other methods can be explained through the unifying theorem.
Indeed, we show the following:
\begin{itemize}
\item All above methods can be represented as conforming discretizations of the proposed variational principle, and the SSE method uses finer grids than the others.
\item The strain error bound of the SSE method is $O(h)$, where $h$ stands for the maximum element diameter.
\end{itemize}
Some numerical experiments are conducted to support the presented theoretical properties.
While we herein deal with the 3-node triangular element~\cite{LL:2018} and the 4-node quadrilateral element~\cite{LKL:2021}, our argument herein can be generalized straightforwardly to polygonal elements.

The remainder of this paper is organized as follows.
The displacement variational formulation for linear elasticity is reviewed in Sect.~\ref{Sec:Elasticity}.
In Sect.~\ref{Sec:SSE}, we introduce the SSE method and show that the method can be interpreted from the viewpoint of projection operators.
The variational framework for the SSE method is established in Sect.~\ref{Sec:VP}.
In Sect.~\ref{Sec:Convergence}, the convergence theory for the SSE method based on the variational principle established in Sect.~\ref{Sec:VP} is presented.
Several numerical results that support our theory are provided in Sect.~\ref{Sec:Numerical}.
We provide concluding statements in Sect.~\ref{Sec:Conclusion}.

\section{Linear elasticity}
\label{Sec:Elasticity}
\label{Sec:Elasticity}
We consider a linear elastic problem.
Let $\Omega \subset \mathbb{R}^2$ be a bounded and polygonal domain representing a two-dimensional linear elastic solid.
The boundary $\partial \Omega$ of $\Omega$ comprises two parts: $\Gamma_D \neq \emptyset$ and $\Gamma_N = \partial \Omega \setminus \Gamma_D$.
The equilibrium equation is stated as
\begin{equation}
\label{governing_eq}
\div \bsigma + \bb = \mathbf{0} \gap\textrm{ in } \Omega
\end{equation}
with the Dirichlet boundary condition
\begin{equation}
\label{Dirichlet}
\bu = \bu_{\Gamma} \gap \textrm{ on } \Gamma_D
\end{equation}
and the Neumann boundary condition
\begin{equation}
\label{Neumann}
\bsigma \bn = \bt \gap\textrm{ on } \Gamma_N,
\end{equation}
where $\bsigma$ is the Cauchy stress, $\bu$ is the displacement field, $\bb$ is the body force, $\bu_{\Gamma}$ is the prescribed displacement on $\Gamma_D$, $\bt$ is the prescribed traction on $\Gamma_N$, and $\bn$ is the unit outward normal to $\Gamma_N$.
To simplify the presentation, we introduce the Voigt notation for stress and strain, i.e., stress and strain tensors are written as column vectors:
\begin{equation*}
\bsigma = \begin{bmatrix}\sigma_{xx}& \sigma_{yy}& \sigma_{xy} \end{bmatrix}^T, \quad
\bepsilon = \begin{bmatrix} \epsilon_{xx} & \epsilon_{yy} & 2\epsilon_{xy} \end{bmatrix}^T.
\end{equation*}
Subsequently, the compatibility relation between the displacement $\bu$ and the strain $\bepsilon$ is expressed as
\begin{equation}
\label{governing_comp}
\bepsilon = \bB \bu \gap\textrm{ in } \Omega,
\end{equation}
where $\bB$ is a matrix of differential operators given by
\begin{equation*}
\bB = \begin{bmatrix}\frac{\partial}{\partial x} & 0 & \frac{\partial}{\partial y}\\
0 & \frac{\partial}{\partial y} & \frac{\partial}{\partial x} \end{bmatrix}^T .
\end{equation*}
The stress-strain constitutive equation is written as follows:
\begin{equation}
\label{governing_cons}
\bsigma = \bD \bepsilon \gap\textrm{ in } \Omega,
\end{equation}
where $\bD$ is a $3 \times 3$ symmetric and positive definite matrix that relies on a material composed of an elastic solid.
We assume that the material is uniform, i.e., $\bD$ is constant in $\Omega$.
The linear elastic problem is governed by three equations~\eqref{governing_eq},~\eqref{governing_comp}, and~\eqref{governing_cons} with the boundary conditions~\eqref{Dirichlet} and~\eqref{Neumann}.

Next, we consider the weak formulation, i.e., the displacement variational formulation for the linear elastic problem.
In the following, we set $\bu_{\Gamma} = \mathbf{0}$ in~\eqref{Dirichlet} for simplicity.
Let $V$ be a space of kinematically admissible displacement fields defined as
\begin{equation*}
V = \left\{ \bu \in (H^1 (\Omega))^2 : \bu = \mathbf{0} \textrm{ on } \Gamma_D \right\},
\end{equation*}
where $H^k(\Omega)$, $k \geq 1$,  is the collection of $L^2 (\Omega)$-functions whose all $k$th order partial derivatives are in $L^2 (\Omega)$.
A space $W$ of strain and stress fields is given by
\begin{equation*}
W = (L^2 (\Omega))^3.
\end{equation*}
A bilinear form $a(\cdot, \cdot)$ on $V$ is defined by
\begin{equation}
\label{bilinear}
a(\bu, \bv) = \intO \bD \bepsilon[\bu] : \bepsilon[\bv] \,d\Omega, \quad \bu, \bv \in V,
\end{equation}
where $\bepsilon [\bu] = \bB \bu$, and the symbol $:$ denotes the Euclidean inner product in $\mathbb{R}^3$.
Note that for $\bu \in V$, we have $\bepsilon [\bu] \in W$.
Clearly, $a(\cdot, \cdot)$ is symmetric, continuous, and coercive~\cite[Chapter~11]{BS:2008}.
Let $f$ denote a continuous linear functional on $V$ given by
\begin{equation*}
f (\bu) = \intO \bb \cdot \bu \,d \Omega + \int_{\Gamma_N} \bt \cdot \bu \, d \Gamma, \quad \bu \in V.
\end{equation*}
It is well-known that, under very mild conditions on $\bb$ and $\bt$~(see, e.g.,~\cite{BS:2008}), a solution of the linear elastic problem is in $(H^2 (\Omega))^2$ and is characterized by the following variational problem: find $\bu \in V$ such that
\begin{equation}
\label{weak}
a( \bu, \bv) = f ( \bv ) \quad \forall \bv \in V.
\end{equation}
By the Lax--Milgram theorem~\cite[Theorem~2.7.7]{BS:2008}, the problem~\eqref{weak} has a unique solution and it solves the following quadratic optimization problem:
\begin{equation}
\label{weak_min}
\min_{\bu \in V} \left\{ \frac{1}{2} a(\bu, \bu) - f ( \bu) \right\}.
\end{equation}

\section{The strain-smoothed element method}
\label{Sec:SSE}
In this section, we briefly introduce the SSE method for solving~\eqref{weak}.
We closely follow the explanations presented in~\cite{LL:2018,LKL:2021}.
In addition, we present an alternative view to the SSE method that can be described in terms of orthogonal projection operators defined on particular meshes.
Similar discussions were made in~\cite{LNN:2010} for the S-FEMs.

For a subregion $K$ of $\Omega$ and a nonnegative integer $n$, let $\cP_n (K)$ denote the collection of all polynomials of degree less than or equal to $n$ on $K$.

\subsection{Strain-smoothed 3-node triangular element}
\label{Subsec:SSE3}
We describe the strain-smoothed 3-node triangular element proposed in~\cite{LL:2018}.
Let $\cT_h$ be a quasi-uniform triangulation of $\Omega$ with a maximum element diameter $h > 0$.
We set the discrete displacement space $V_h \subset V$ as the collection of the continuous and piecewise linear functions on $\cT_h$ satisfying the homogeneous Dirichlet boundary condition on $\Gamma_D$, i.e.,
\begin{equation*}
V_h = \left\{ \bu \in V : \bu|_{T} \in (\cP_1 (T))^2 \gap\forall T \in \cT_h \right\}.
\end{equation*}
We define the discrete strain--stress space $W_h$ associated with the subdivision $\cT_h$ as
\begin{equation*}
W_{h} = \left\{ \bepsilon \in W : \bepsilon|_{T} \in (\cP_0 (T))^3 \gap\forall T \in \cT_{h} \right\}.
\end{equation*}
Clearly, $\bepsilon [\bu] = \bB \bu$ and $\bsigma [\bu] = \bD \bB \bu$ belong to $W_h$ when $\bu \in V_h$.

The standard FEM for linear elasticity solves the Galerkin approximation of~\eqref{weak} defined on $V_h$: find $\bu_h \in V_h$ such that
\begin{equation*}
a(\bu_h, \bv) = f (\bv) \quad \forall \bv \in V_h,
\end{equation*}
where the bilinear form $a(\cdot, \cdot)$:~$V_h \times V_h \rightarrow \mathbb{R}$ was given in~\eqref{bilinear}.
For the SSE method~\cite{LL:2018}, we use an alternative bilinear form $\bar{a}(\cdot, \cdot)$:~$V_h \times V_h \rightarrow \mathbb{R}$ by replacing $\bepsilon [\bu]$ in~\eqref{bilinear} with an appropriate \textit{smoothed} strain field $\bar{\bepsilon}[\bu]$, that is,
\begin{equation}
\label{bilinear_smoothed}
\bar{a}(\bu, \bv) = \intO \bD \bar{\bepsilon}[\bu]: \bar{\bepsilon}[\bv] \,d\Omega, \quad \bu, \bv \in V_h.
\end{equation}

\begin{figure}[]
\centering
\includegraphics[width=0.8\textwidth]{.//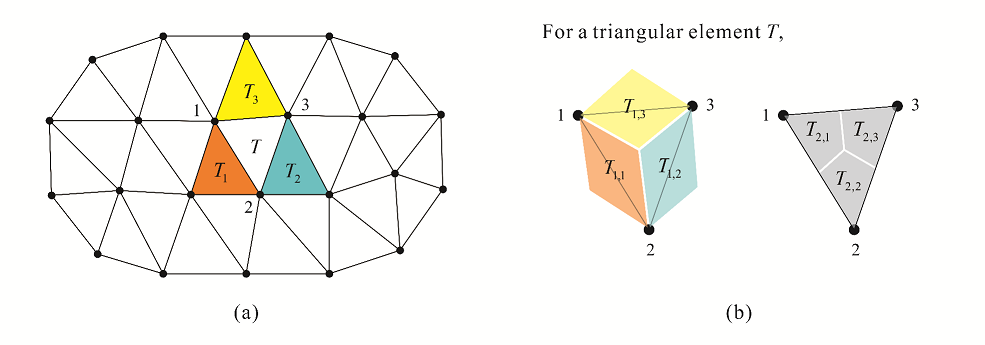}
\caption{(a) Three neighboring elements $T_1$, $T_2$, and $T_3$ of an interior element $T \in \cT_h$. (b) $T_{1,i}$ and $T_{2,i}$, $i=1,2,3$ are the subregions in $\cT_{1,h}$ and $\cT_{2,h}$ that overlap with $T$, respectively.}
\label{Fig:adjacent}
\end{figure}

\begin{figure}[]
\centering
\includegraphics[width=0.8\textwidth]{.//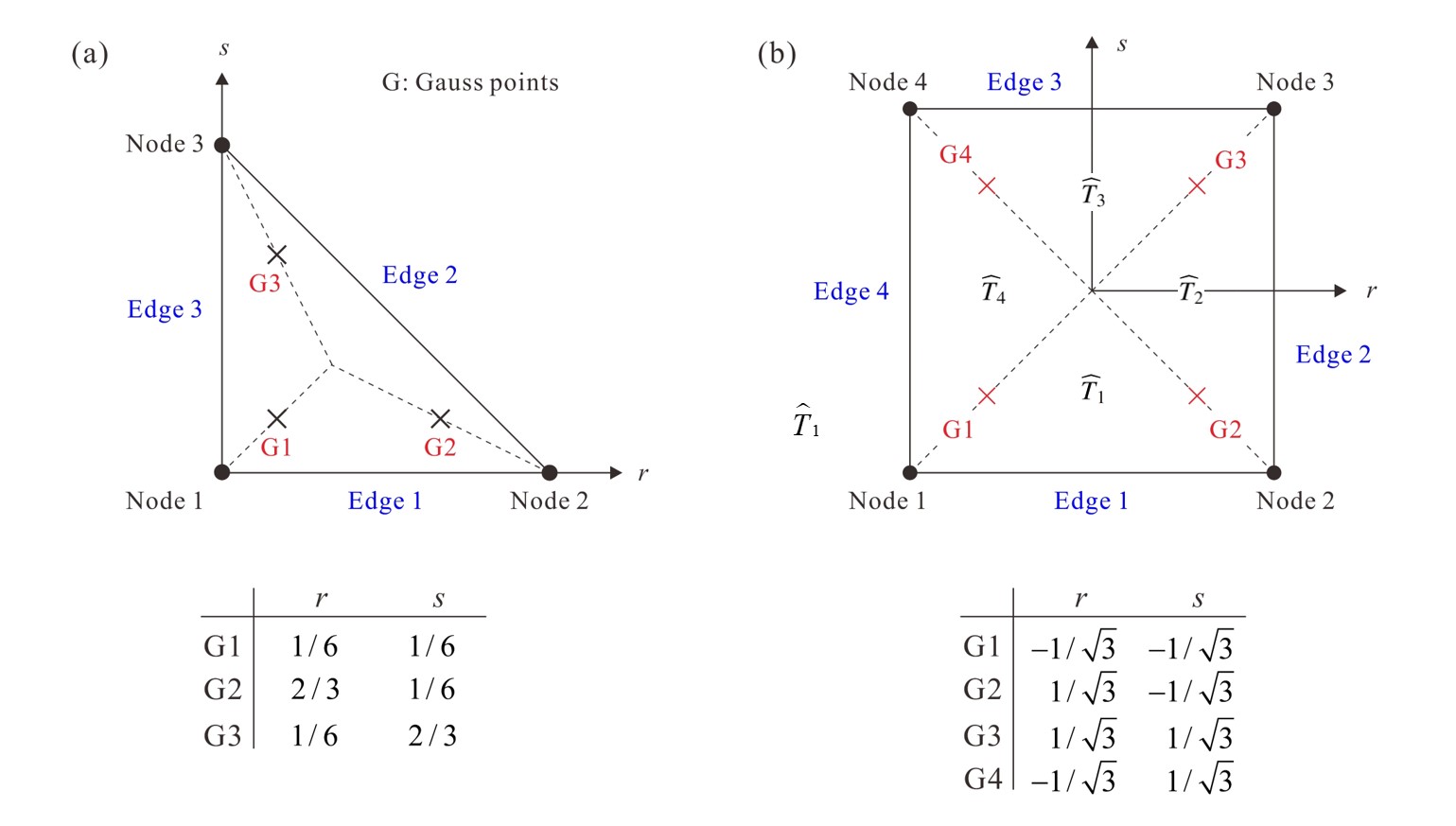}
\caption{Node and edge numbering conventions for (a) triangular elements and (b) quadrilateral elements.
Gaussian integration points for the elements are denoted by $\mathrm{G}$, and four subtriangles $\widehat{T}_1$, $\widehat{T}_2$, $\widehat{T}_3$, and $\widehat{T}_4$ are defined for the quadrilateral element.}
\label{Fig:coord}
\end{figure}

In the following, we present the construction of the SSE smoothing operator $S_h$:~$W_h \rightarrow \overline{W}_h$ that maps a given strain field $\bepsilon \in W_h$ to the corresponding smoothed strain field $\bar{\bepsilon} \in \overline{W}_h$, where
\begin{equation*}
\overline{W}_{h} = \left\{ \bar{\bepsilon} \in W : \bar{\bepsilon}|_{T} \in (\cP_1 (T))^3 \gap\forall T \in \cT_{h} \right\}. 
\end{equation*}
That is, the resulting $\bar{\bepsilon} = S_h \bepsilon$ is piecewise linear.
Take any element $T \in \cT_h$.
We first assume that $T$ is an interior element; that is, there exist three elements $T_1$, $T_2$, and $T_3$ in $\cT_{h}$ adjacent to $T$, as shown in Fig.~\ref{Fig:adjacent}(a).
Intermediate smoothed strains $\hat{\bepsilon}^{(i)} \in \mathbb{R}^3$, $i=1,2,3$, are defined by
\begin{equation}
\label{smoothed_intermediate}
\hat{\bepsilon}^{(i)} = \frac{1}{|T| + |T_i|} \left( \int_{T} \bepsilon \,d\Omega + \int_{T_i} \bepsilon \,d\Omega \right).
\end{equation}
If $T$ is an exterior element, that is, there is no adjacent element $T_i$ for some $i$, then the corresponding intermediate smoothed strain $\hat{\bepsilon}^{(i)}$ is defined by simply replacing $T_i$ with~\eqref{smoothed_intermediate} by $T$.
Differently from the existing strain smoothing methods~(see, e.g.,~\cite{LN:2010,LDN:2007,NBN:2008,ZL:2018}), the SSE method has a procedure for unifying the piecewise constant intermediate smoothed strains $\hat{\bepsilon}^{(i)}$ within the element, thereby resulting in a linear smoothed strain field $\bar{\bepsilon}$.
Using the intermediate smoothed strains in~\eqref{smoothed_intermediate}, we assign the pointwise values of $\bar{\bepsilon}$ at three Gaussian integration points~($\mathrm{G}1$, $\mathrm{G}2$, and $\mathrm{G}3$ in Fig.~\ref{Fig:coord}(a)) of $T$ in the following manner:
\begin{equation}
\label{Gaussian}
\bar{\bepsilon} (\mathrm{G}i) = \frac{1}{2} (\hat{\bepsilon}^{(i-1)} + \hat{\bepsilon}^{(i)})
\end{equation}
with the convention $\hat{\bepsilon}^{(0)} = \hat{\bepsilon}^{(3)}$, where $i=1,2,3$.
From~\eqref{Gaussian}, the smoothed strain field $\bar{\bepsilon}$ in~\eqref{bilinear_smoothed} is uniquely determined for $T$ by linear interpolation.

Finally, we have
\begin{equation}
\label{bilinear_SSE1}
\bar{a}(\bu, \bv) = \intO \bD S_h \bepsilon [\bu]: S_h \bepsilon [\bv] \,d\Omega, \quad \bu, \bv \in V_h
\end{equation}
and solve the following problem: find $\bar{\bu}_h \in V_h$ such that
\begin{equation}
\label{weak_SSE}
\bar{a}(\bar{\bu}_h, \bv) = f ( \bv) \quad \forall \bv \in V_h.
\end{equation}

\subsection{Strain-smoothed 4-node quadrilateral element}
\label{Subsec:SSE4}
Recently, the SSE method has been extended for the 3-node MITC shell element and the 4-node quadrilateral solid element~\cite{LL:2019,LKL:2021}.  
Here, we deal with the strain-smoothed 4-node quadrilateral element~\cite{LKL:2021}.
Let $\cT_h$ be a quasi-uniform subdivision of the domain $\Omega$ consisting of quadrilateral elements with a maximum element diameter $h > 0$.
Subsequently, a triangulation $\widehat{\cT}_h$ of $\Omega$ is formed by subdividing each element in $\cT_h$ into four nonoverlapping subtriangles based on the nodes and the center point~($r=s=0$~in Fig.~\ref{Fig:coord}(b)) of the element.
The discrete displacement space for the 4-node quadrilateral element using piecewise linear shape functions proposed in~\cite{KL:2018} on $\cT_h$ is given by
\begin{equation*}
V_h = \left\{ \bu \in V : \bu|_{\widehat{T}} \in (\cP_1 (\widehat{T}))^2 \gap\forall \widehat{T} \in \widehat{\cT}_h
\textrm{ and } \bu (\bx_{T,0}) = \frac{1}{4} \sum_{i=1}^4 \bu (\bx_{T,i}) \gap\forall T \in \cT_h \right\},
\end{equation*}
where $\bx_{T,0}$ is the center point of the element $T \in \cT_h$ and $\bx_{T,i}$, $i=1,2,3,4$, are the nodes of $T$.
Then, for any $\bu \in V_h$, $\bepsilon [\bu] = \bB \bu \in W_h$ and $\bsigma [\bu] = \bD \bB \bu \in W_h$ belong to $W_h$, where
\begin{equation*}
W_h = \left\{ \bepsilon \in W: \bepsilon|_{\widehat{T}} \in (\cP_0 (\widehat{T}))^3 \gap\forall \widehat{T} \in \widehat{\cT}_h \right\}.
\end{equation*}
For the strain field $\bepsilon \in W_h$, the corresponding smoothed strain field $\bar{\bepsilon} = S_h \bepsilon$ is contained in
\begin{equation*}
\overline{W}_h = \left\{ \bar{\bepsilon} \in W : \bepsilon|_T \in (\cQ_{1,1}(T))^3 \gap\forall T \in \cT_h \right\},
\end{equation*}
where $\cQ_{1,1}(T)$ denotes the collection of all bilinear functions on $T$.
We take any element $T \in \cT_h$.
The element $T$ consists of four subtriangles $\widehat{T}_1$, $\widehat{T}_2$, $\widehat{T}_3$, and $\widehat{T}_4$ in $\widehat{\cT}_h$, as depicted in Fig.~\ref{Fig:coord}(b).
If $T$ is an interior element, then for each of $\widehat{T}_i$, $i=1,2,3,4$, it has a neighboring subtriangle $\widehat{T}_{i}^*$ that belongs to an element adjacent to $T$.
Intermediate smoothed strains $\hat{\bepsilon}^{(i)} \in \mathbb{R}^3$, $i=1,2,3,4$, are defined by
\begin{equation}
\label{smoothed_intermediate_quad}
\hat{\bepsilon}^{(i)}= \frac{1}{|\widehat{T}_i| + |\widehat{T}_i^*|} \left( \int_{\widehat{T}_i} \bepsilon \,d\Omega + \int_{\widehat{T}_i^*} \bepsilon \,d\Omega \right).
\end{equation}
If $T$ is an exterior element such that there does not exist a neighboring subtriangle $\widehat{T}_{i}^*$ for some $i$, then the corresponding intermediate smoothed strain $\hat{\bepsilon}^{(i)}$ is defined by replacing $\widehat{T}_i^*$ in~\eqref{smoothed_intermediate_quad} with $\widehat{T}_i$.
Similar to the case of 3-node triangular element, there is an additional unification step in the SSE method.
The pointwise values of $\bar{\bepsilon}$ at the four Gaussian integration points~(see Fig.~\ref{Fig:coord}(b)) are determined by
\begin{equation*}
\bar{\bepsilon} (\mathrm{G}i) = \frac{1}{|\widehat{T}_{i-1}| + |\widehat{T}_i|}\left( |\widehat{T}_{i-1}| \hat{\bepsilon}^{(i-1)} + |\widehat{T}_{i}| \hat{\bepsilon}^{(i)} \right),
\end{equation*}
with the conventions $\widehat{T}_0 = \widehat{T}_4$ and $\hat{\bepsilon}^{(0)} = \hat{\bepsilon}^{(4)}$.
Finally, the smoothed strain field $\bar {\bepsilon}$ is determined on $T$ by bilinear interpolation.
We solve the variational problem~\eqref{weak_SSE} with the smoothed strain field define as above.

\subsection{An alternative view: twice-projected strain}
\begin{figure}[]
\centering
\includegraphics[width=0.8\textwidth]{.//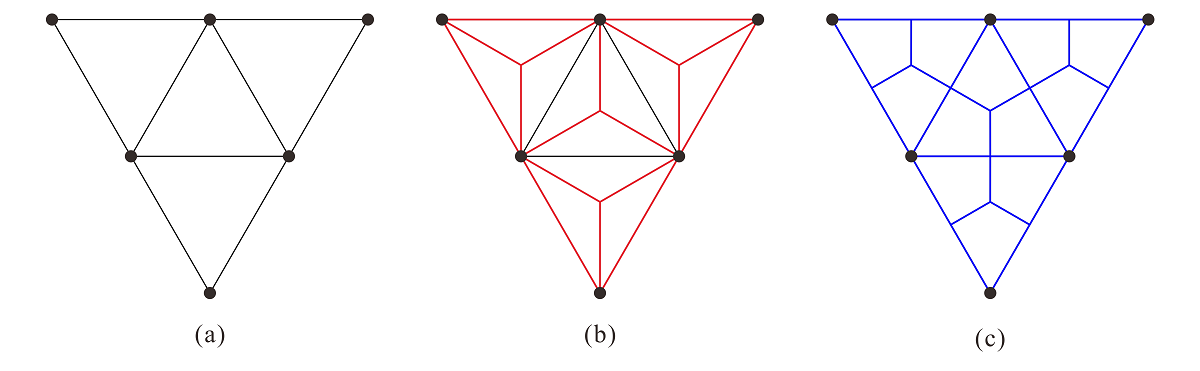}
\caption{Three subdivisions of the domain $\Omega$ for the strain-smoothed 3-node triangular element: (a) $\cT_h$, (b) $\cT_{1,h}$, and (c) $\cT_{2,h}$.}
\label{Fig:subdivision}
\end{figure}

We present an alternative derivation of the SSE method, which will be useful in the convergence analysis of the method.
An alternative smoothed strain field $\bar{\bepsilon}$ defined in the following is different from that explained above, but it eventually provides an equivalent formulation to~\eqref{weak_SSE}.

First, we consider the strain-smoothed 3-node triangular element introduced in Sect.~\ref{Subsec:SSE3}.
We construct two subdivisions $\cT_{1,h}$ and $\cT_{2,h}$ of $\Omega$, other than $\cT_h$, as follows.
For two neighboring elements $T_1$ and $T_2$ in $\cT_h$, let $e$ be the edge shared by them.
Subsequently, we consider a quadrilateral whose vertices are the endpoints of $e$ and the centroids of $T_1$ and $T_2$.
We define $\cT_{1,h}$ as the collection of such quadrilaterals.
In order to construct $\cT_{2,h}$, we partition each element of $\cT_h$ into three pieces by joining the centroid and the midpoints of the element edges.
Then, $\cT_{2,h}$ is defined as the collection of such pieces.
Fig.~\ref{Fig:subdivision} displays $\cT_h$, $\cT_{1,h}$, and $\cT_{2,h}$.

In the case of the strain-smoothed 4-node quadrilateral element introduced in Sect.~\ref{Subsec:SSE4}, subdivisions $\cT_{1,h}$ and $\cT_{2,h}$ can be defined in an analogous manner.
More precisely, $\cT_{1,h}$ is the collection of quadrilaterals whose vertices are the center points of each of the two adjacent elements in $\cT_h$ and the endpoints of the shared edge.
On the contrary, $\cT_{2,h}$ consists of quadrilaterals formed by joining the center point and the midpoints of the edges of each element in $\cT_h$.
In what follows, we deal with the 3-node triangular element and the 4-node quadrilateral element in a unified fashion.

For $k=1,2$, let $W_{k,h} \subset W$ be the collection of piecewise constant functions on $\cT_{k,h}$, i.e.,
\begin{equation*}
W_{k,h} = \left\{ \bepsilon \in W : \bepsilon|_{T} \in (\cP_0 (T))^3 \gap\forall T \in \cT_{k,h} \right\}.
\end{equation*}
The piecewise smoothing operator $P_{k,h}$:~$W \rightarrow W_{k,h}$ is defined by
\begin{equation}
\label{P_h}
(P_{k,h}\bepsilon)(x) = \frac{1}{|T|} \int_{T} \bepsilon \,d\Omega, \quad \bepsilon \in W,\gap T \in \cT_{k,h},\gap x \in T. 
\end{equation}
It was observed in~\cite{LNN:2010} that piecewise smoothing operators of the form~\eqref{P_h} are orthogonal projectors; rigorous statements are provided in the following lemmas.

\begin{lemma}
\label{Lem:commute}
Let $\mathbf{A}$ be a $3 \times 3$ matrix.
For $k=1,2$, the piecewise smoothing operator $P_{k,h}$ commutes with $\mathbf{A}$, i.e.,
\begin{equation*}
P_{k,h}(\mathbf{A}\bepsilon) = \mathbf{A}P_{k,h} \bepsilon, \quad \bepsilon \in W.
\end{equation*}
\end{lemma}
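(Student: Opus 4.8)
The plan is to verify the identity pointwise on each cell of the subdivision by unwinding the definition of $P_{k,h}$ and exploiting the linearity of the integral together with the constancy of $\mathbf{A}$. First I would fix $k \in \{1,2\}$, an arbitrary cell $T \in \cT_{k,h}$, and a point $x \in T$. Since $\mathbf{A}$ is a fixed $3 \times 3$ matrix, the product $\mathbf{A}\bepsilon$ is again an element of $W = (L^2(\Omega))^3$, so $P_{k,h}(\mathbf{A}\bepsilon)$ is well defined. Applying the definition~\eqref{P_h} to $\mathbf{A}\bepsilon$ gives
\begin{equation*}
(P_{k,h}(\mathbf{A}\bepsilon))(x) = \frac{1}{|T|} \int_{T} \mathbf{A}\bepsilon \,d\Omega .
\end{equation*}

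The only substantive step is then to commute $\mathbf{A}$ with the integral. Interpreting the vector-valued integral componentwise, the $i$th component of $\int_T \mathbf{A}\bepsilon \,d\Omega$ equals $\int_T \sum_{j} A_{ij}\epsilon_j \,d\Omega = \sum_j A_{ij}\int_T \epsilon_j \,d\Omega$, where the entries $A_{ij}$ can be pulled out precisely because they do not depend on the integration variable. This is exactly the $i$th component of $\mathbf{A}\int_T \bepsilon \,d\Omega$, so $\int_T \mathbf{A}\bepsilon \,d\Omega = \mathbf{A}\int_T \bepsilon \,d\Omega$. Dividing by $|T|$ and using~\eqref{P_h} once more yields
\begin{equation*}
(P_{k,h}(\mathbf{A}\bepsilon))(x) = \mathbf{A}\,\frac{1}{|T|} \int_{T} \bepsilon \,d\Omega = \mathbf{A}(P_{k,h}\bepsilon)(x).
\end{equation*}
Since $T \in \cT_{k,h}$ and $x \in T$ were arbitrary and the cells cover $\Omega$, the two functions agree a.e.\ on $\Omega$, which is the claimed identity. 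There is no real obstacle here; the argument is a one-line consequence of linearity, and the only point deserving care is that the constancy of $\mathbf{A}$ (which holds because $\bD$, and hence any relevant material matrix, is assumed constant on $\Omega$) is what permits the interchange of $\mathbf{A}$ with the averaging operator.
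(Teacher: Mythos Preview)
Your proof is correct and is exactly the elementary verification the paper has in mind; the paper simply declares the result ``elementary'' without spelling out the computation. Your pointwise argument on each cell, using linearity of the integral and constancy of $\mathbf{A}$, is precisely what underlies that one-word proof.
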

\begin{proof}
It is elementary.
\end{proof}

\begin{lemma}
\label{Lem:orthogonal}
For $k=1,2$, the piecewise smoothing operator $P_{k,h}$ is the $(L^2(\Omega))^3$-orthogonal projection onto $W_{k,h}$, i.e., $P_{k,h}^2 = P_{k,h}$ and
\begin{equation*}
\intO P_{k,h} \bepsilon : \bdelta \,d\Omega = \intO \bepsilon : P_{k,h} \bdelta \,d\Omega, \quad \bepsilon, \bdelta \in W.
\end{equation*}
\end{lemma}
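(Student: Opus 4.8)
The plan is to verify the two defining properties of an orthogonal projection separately, in each case reducing the statement to an elementwise computation over the fixed subdivision $\cT_{k,h}$. Throughout I would introduce the shorthand $\bar{\bepsilon}_T := \frac{1}{|T|}\int_{T} \bepsilon \,d\Omega$ for the average of $\bepsilon$ over an element $T \in \cT_{k,h}$, so that by the very definition~\eqref{P_h} the restriction $(P_{k,h}\bepsilon)|_T = \bar{\bepsilon}_T$ equals this constant vector.

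For idempotency $P_{k,h}^2 = P_{k,h}$, the first thing I would note is that $P_{k,h}\bepsilon$ is constant on each $T \in \cT_{k,h}$, and hence $P_{k,h}\bepsilon \in W_{k,h}$ for every $\bepsilon \in W$. Applying $P_{k,h}$ a second time replaces $P_{k,h}\bepsilon$ on each $T$ by its average over $T$; but the average of the constant $\bar{\bepsilon}_T$ over $T$ is $\frac{1}{|T|}\int_T \bar{\bepsilon}_T \,d\Omega = \bar{\bepsilon}_T$, so $P_{k,h}(P_{k,h}\bepsilon) = P_{k,h}\bepsilon$.

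For the symmetry identity I would split both integrals into contributions over the elements of $\cT_{k,h}$ and evaluate each contribution by pulling the constant averaged factor outside the integral. On a fixed $T$, since $\bar{\bepsilon}_T$ is constant,
\[
\int_{T} P_{k,h}\bepsilon : \bdelta \,d\Omega = \bar{\bepsilon}_T : \int_{T} \bdelta \,d\Omega = |T|\,\bar{\bepsilon}_T : \bar{\bdelta}_T,
\]
and symmetrically
\[
\int_{T} \bepsilon : P_{k,h}\bdelta \,d\Omega = \left( \int_{T} \bepsilon \,d\Omega \right) : \bar{\bdelta}_T = |T|\,\bar{\bepsilon}_T : \bar{\bdelta}_T.
\]
Summing these elementwise identities over all $T \in \cT_{k,h}$ and using that $\cT_{k,h}$ partitions $\Omega$ up to a set of measure zero then yields the asserted equality over $\Omega$.

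The argument is entirely elementary, and I do not expect a genuine obstacle beyond careful bookkeeping. The only facts that deserve explicit mention are that the inner product $:$ on $\mathbb{R}^3$ is symmetric and bilinear, which is what lets me move the averaged constant factor through the integral and interchange the roles of $\bepsilon$ and $\bdelta$ in the two displays above, and that $\cT_{k,h}$ is a true subdivision of $\Omega$, so the sum of elementwise integrals reconstructs the integral over $\Omega$. Idempotency together with this symmetry identity is precisely the statement that $P_{k,h}$ is the $(L^2(\Omega))^3$-orthogonal projection onto $W_{k,h}$.
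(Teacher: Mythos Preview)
Your argument is correct: the elementwise computation showing both sides equal $|T|\,\bar{\bepsilon}_T:\bar{\bdelta}_T$ on each $T\in\cT_{k,h}$, together with the trivial idempotency check, is exactly the standard verification that averaging over a partition is the $L^2$-orthogonal projection onto piecewise constants. The paper itself does not prove this lemma but simply cites~\cite[Remarks~2 and~4]{LNN:2010}; your explicit computation is precisely the content one would find behind that citation, so there is no substantive difference in approach.
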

\begin{proof}
See~\cite[Remarks~2 and~4]{LNN:2010}.
\end{proof}

Now, we set $\bar{\bepsilon} = P_{2,h} P_{1,h}\bepsilon$ in~\eqref{bilinear_smoothed}.
That is, we have
\begin{equation}
\label{bilinear_SSE2}
\bar{a}(\bu, \bv) = \intO \bD P_{2,h} P_{1,h} \bepsilon [\bu]: P_{2,h} P_{1,h} \bepsilon [\bv] \,d\Omega, \quad \bu, \bv \in V_h .
\end{equation}
We note that $\bar{\bepsilon} = P_{2,h} P_{1,h} \bepsilon \in W_{2,h}$ in~\eqref{bilinear_SSE2}, whereas its counterpart $\bar{\bepsilon} = S_h \bepsilon$ in~\eqref{bilinear_SSE1} belongs to $\overline{W}_h$.
Even though~\eqref{bilinear_SSE1} and~\eqref{bilinear_SSE2} use different smoothed strain fields, one can prove that they result in the same bilinear form $\bar{a}(\cdot, \cdot)$. 

\begin{figure}[]
\centering
\includegraphics[width=0.8\textwidth]{.//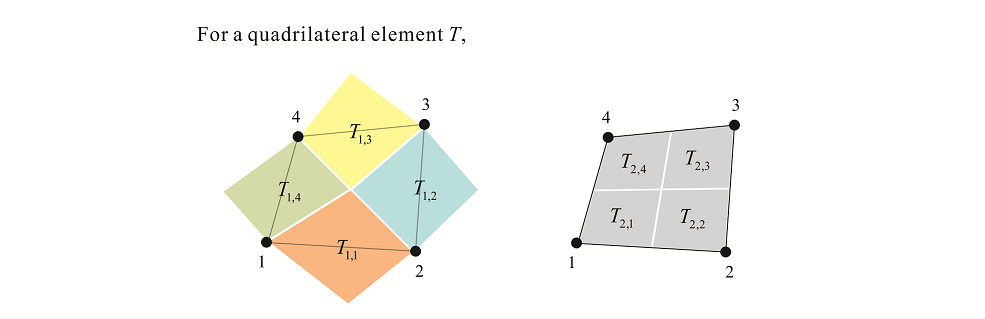}
\caption{Subregions $T_{1,i} \in \cT_{1,h}$ and $T_{2,i} \in \cT_{2,h}$, $i=1,2,3,4$, for the strain-smoothed 4-node quadrilateral element.}
\label{Fig:quad}
\end{figure}

\begin{theorem}
\label{Thm:equiv}
Two bilinear forms in~\eqref{bilinear_SSE1} and~\eqref{bilinear_SSE2} are identical, i.e., it satisfies that
\begin{equation*}
\intO \bD S_h \bepsilon [\bu] : S_h \bepsilon [\bv] \, d \Omega = \intO \bD P_{2,h} P_{1,h} \bepsilon [\bu] : P_{2,h} P_{1,h} \bepsilon [\bv] \, d \Omega, \quad \bu, \bv \in V_h.
\end{equation*}
\end{theorem}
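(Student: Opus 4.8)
The plan is to prove the identity elementwise. Both bilinear forms split as sums over $T \in \cT_h$ of the corresponding element integrals, so it suffices to show, for each fixed $T$, that $S_h \bepsilon[\bu]$ and $P_{2,h}P_{1,h}\bepsilon[\bu]$ induce the same element contribution. The engine of the argument is that these two smoothed strains, although they live in different spaces ($\overline{W}_h$ versus $W_{2,h}$), take the \emph{same values at the Gauss integration points} of $T$; the element integrals then coincide because they are evaluated by the same Gauss rule.

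First I would identify $P_{1,h}\bepsilon$ on $T$. For $\bepsilon = \bepsilon[\bu] \in W_h$, which is piecewise constant on the subcells of $T$ (the three edge-based subtriangles for the triangular element, the four subtriangles $\widehat{T}_i$ for the quadrilateral element), I claim that $P_{1,h}\bepsilon$ restricted to $T$ is piecewise constant with value $\hat{\bepsilon}^{(i)}$ on the $i$th subcell. This reduces to a geometric fact about the cells of $\cT_{1,h}$: each such cell is cut by the shared edge into a piece lying in $T$ and a piece lying in the neighbor, and by the centroid/center construction these two pieces have areas proportional to $|T|$ and $|T_i|$ for the triangle, or equal to $|\widehat{T}_i|$ and $|\widehat{T}_i^*|$ for the quadrilateral. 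Hence by Lemma~\ref{Lem:orthogonal} the cell average coincides exactly with $\hat{\bepsilon}^{(i)}$ as defined in~\eqref{smoothed_intermediate} and~\eqref{smoothed_intermediate_quad}.

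Next I would apply $P_{2,h}$. Each cell $R_j$ of $\cT_{2,h}$ surrounds one node and is split by the segment joining that node to the centroid/center into two pieces, one in each of the two subcells adjacent to the node; a short computation using the edge midpoints shows that these overlap areas are exactly $\tfrac12$ of the respective subcell areas. Averaging the piecewise-constant $P_{1,h}\bepsilon$ over $R_j$ therefore yields $\tfrac12(\hat{\bepsilon}^{(i-1)} + \hat{\bepsilon}^{(i)})$ for the triangle and the area-weighted average $\bigl(|\widehat{T}_{i-1}|\hat{\bepsilon}^{(i-1)} + |\widehat{T}_i|\hat{\bepsilon}^{(i)}\bigr) / \bigl(|\widehat{T}_{i-1}| + |\widehat{T}_i|\bigr)$ for the quadrilateral, which are precisely the prescribed Gauss-point values $\bar{\bepsilon}(\mathrm{G}i)$ of $S_h\bepsilon$. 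Since the Gauss point $\mathrm{G}i$ lies in $R_i$, this shows $S_h\bepsilon$ and $P_{2,h}P_{1,h}\bepsilon$ agree at every Gauss point of $T$.

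Finally I would close the argument by numerical integration. The integrand $\bD S_h \bepsilon[\bu] : S_h \bepsilon[\bv]$ is, on the reference element, a polynomial of total degree $\le 2$ (triangle) or of degree $\le 3$ in each variable (quadrilateral, after including the bilinear Jacobian), so the three-point, respectively $2\times 2$, Gauss rule integrates the $S_h$-side exactly, and that rule only sees the Gauss-point values we have just matched. For the triangular element the affine Jacobian makes each Gauss weight equal to the cell area $|R_j| = |T|/3$, so the piecewise-constant side is integrated exactly by the same rule as well, and the two element integrals are literally equal. I expect the main obstacle to sit exactly here for the quadrilateral: because the bilinear map has a nonconstant Jacobian, the Gauss weights need not equal the cell areas $|R_j|$, so the cleanest route is to carry out the comparison at the level of the Gauss quadrature that actually defines the element integrals (or, equivalently, to push through the overlap-area and Jacobian bookkeeping) rather than comparing the two $L^2$ contributions directly. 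Throughout, the constancy of $\bD$ lets it be pulled out of all averages, and polarization reduces the final check to the diagonal $\bu = \bv$.
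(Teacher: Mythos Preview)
Your proposal is correct and follows essentially the same route as the paper: reduce to the diagonal by polarization, work element by element, verify via the area identities $|T\cap T_{1,i}|:|T_i\cap T_{1,i}|=|T|:|T_i|$ and $|T_{1,i-1}\cap T_{2,i}|=|T_{1,i}\cap T_{2,i}|$ that $(P_{2,h}P_{1,h}\bepsilon)_{T_{2,i}}=(S_h\bepsilon)(\mathrm{G}i)$, and then match the two element integrals using exactness of the three-point Gauss rule together with $|T_{2,i}|=|T|/3$. The paper likewise spells out only the triangular case and asserts that the quadrilateral case follows by a similar argument, so your caveat about the Jacobian bookkeeping there is in line with what the paper leaves implicit.
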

\begin{proof}
For simplicity, we present the proof for the case of the 3-node triangular element only; the 4-node case can be proven by a similar argument.

Thanks to the polarization identity~\cite[Theorem~0.19]{Teschl:2009}, it suffices to show that
\begin{equation*}
\int_T \bD S_h \bepsilon[\bu]: S_h \bepsilon[\bu]\,d\Omega = \int_T \bD P_{2,h} P_{1,h} \bepsilon [\bu] : P_{2,h} P_{1,h} \bepsilon [\bu] \,d \Omega
\end{equation*}
for $\bu \in V_h$ and $T \in \cT_h$.
We take any $\bu \in V_h$ and write $\bepsilon = \bepsilon[\bu]$.
Assume for simplicity that $T$ is an interior element.
Let $T_i$, $i=1,2,3$ be the neighboring elements of $T$ in $\cT_h$; see Fig.~\ref{Fig:adjacent}(a).
We denote the values of $\bepsilon$ on elements $T$ and $T_i$ by $\bepsilon_{T}$ and $\bepsilon_{T_i}$, respectively.
As three-point Gaussian integration is exact for this case, we have
\begin{equation*}
\int_T \bD S_h \bepsilon: S_h \bepsilon \,d\Omega = \frac{|T|}{3} \sum_{i=1}^3  \bD (S_h \bepsilon ) (\mathrm{G}i) : (S_h \bepsilon ) (\mathrm{G}i), 
\end{equation*}
where the Gaussian points $\mathrm{G}1$, $\mathrm{G}2$, and $\mathrm{G}3$ are shown in Fig.~\ref{Fig:coord}(a).
By~\eqref{smoothed_intermediate} and~\eqref{Gaussian}, $( S_h \bepsilon ) (\mathrm{G}i)$ is computed as follows:
\begin{equation*} \begin{split}
( S_h \bepsilon ) (\mathrm{G}i) &= \frac{1}{2} ( \hat{\bepsilon}^{(i-1)} + \hat{\bepsilon}^{(i)}) \\
&= \frac{1}{2} \left[ \frac{1}{|T| + |T_{i-1}|} \left( \int_{T } \bepsilon \,d\Omega + \int_{T_{i-1}} \bepsilon \,d\Omega\right) + \frac{1}{|T| + |T_{i}|} \left( \int_{T} \bepsilon \,d\Omega + \int_{T_{i}} \bepsilon \,d\Omega \right) \right] \\
&= \frac{1}{2} \left( \frac{|T|\bepsilon_T + |T_{i-1}| \bepsilon_{T_{i-1}}}{|T| + |T_{i-1}|} + \frac{|T|\bepsilon_T + |T_{i}| \bepsilon_{T_{i}}}{|T| + |T_{i}|}\right),
\end{split} \end{equation*}
where the modulo 3 convention is used.
For the case when $T$ is an exterior element such that there is no neighboring element $T_i$ for some $i$, one may replace $T_i$ in the above equality by $T$ to obtain the corresponding result.

On the other hand, let $T_{1,i}$ and $T_{2,i}$, $i=1,2,3$ be the subregions in $\cT_{1,h}$ and $\cT_{2,h}$ that overlap with $T$, respectively; see Figs.~\ref{Fig:adjacent}(b) and~\ref{Fig:quad} for the 3-node and 4-node cases, respectively.
Since $P_{2,h} P_{1,h} \bepsilon$ is piecewise constant on $\cT_{2,h}$, we have
\begin{equation*} \begin{split}
\int_T \bD P_{2,h} P_{1,h} \bepsilon : P_{2,h} P_{1,h} \bepsilon \,d\Omega &= \sum_{i=1}^3 \int_{T_{2,i}} \bD P_{2,h} P_{1,h} \bepsilon : P_{2,h} P_{1,h} \bepsilon \,d \Omega \\
&= \frac{|T|}{3} \sum_{i=1}^3 \bD (P_{2,h}P_{1,h} \bepsilon)_{T_{2,i}} : (P_{2,h}P_{1,h} \bepsilon)_{T_{2,i}}, 
\end{split}\end{equation*}
where $(P_{2,h}P_{1,h} \bepsilon)_{T_{2,i}}$ denotes the value of $P_{2,h}P_{1,h} \bepsilon$ on $T_{2,i}$.
Because $P_{2,h}$ is a piecewise averaging operator, the value of $(P_{2,h}P_{1,h} \bepsilon)_{T_{2,i}}$ is the weighted average of $(P_{1,h} \bepsilon)_{T_{1, i-1}}$ and $(P_{1,h} \bepsilon)_{T_{1, i}}$ with their respective weights $|T_{1,i-1} \cap T_{2,i}|$ and $|T_{1,i} \cap T_{2,i}|$ with the modulo 3 convention.
Similarly, the value of $(P_{1,h} \bepsilon)_{T_{1, i}}$ is the weighted average of $\bepsilon_{T}$ and $\bepsilon_{T_i}$ with their respective weights $|T \cap T_{1,i}|$ and $|T_i \cap T_{1,i}|$.
That is, it follows that
\begin{equation*} 
\begin{split}
(P_{2,h}P_{1,h} \bepsilon)_{T_{2,i}}
&= \frac{1}{2} \left( (P_{1,h} \bepsilon)_{T_{1, i-1}} + (P_{1,h} \bepsilon)_{T_{1, i}}\right)
\quad \hspace{1.6cm} (\because \gap |T_{1,i-1} \cap T_{2,i}| = |T_{1,i} \cap T_{2,i}|)
\\
&= \frac{1}{2} \left( \frac{|T|\bepsilon_T + |T_{i-1}| \bepsilon_{T_{i-1}}}{|T| + |T_{i-1}|} + \frac{|T|\bepsilon_T + |T_{i}| \bepsilon_{T_{i}}}{|T| + |T_{i}|}\right),
\quad (\because \gap |T \cap T_{1,i}| : |T_i \cap T_{1,i}| = |T| : |T_i| )
\end{split} 
\end{equation*}
where $(P_{1,h} \bepsilon)_{T_{1,i}}$ is the value of  $P_{1,h} \bepsilon$ on $T_{1,i}$.
When $T$ is an exterior element, say $T_i$ does not exist for some $i$, we have
$$(P_{1,h} \bepsilon)_{T_{1,i}} = \bepsilon_T = \frac{|T|\bepsilon_T + |T| \bepsilon_{T}}{|T| + |T|}.$$
This completes the proof.
\end{proof}

As a direct consequence of Theorem~\ref{Thm:equiv}, two bilinear forms~\eqref{bilinear_SSE1} and~\eqref{bilinear_SSE2} provide the same displacement solution $\bar{\bu}_h \in V_h$ when they are adopted for~\eqref{weak_SSE}.
On the contrary, they have different distributions in smoothed strain fields;~\eqref{bilinear_SSE2} has piecewise constant fields within an element, whereas~\eqref{bilinear_SSE1} has a linear/bilinear field.
We close this section by presenting the uniqueness theorem for the solution of the SSE method.

\begin{proposition}
\label{Prop:unique}
The SSE method~\eqref{weak_SSE} has a unique solution.
\end{proposition}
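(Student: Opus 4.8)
The plan is to exploit finite-dimensionality: since \eqref{weak_SSE} is a square linear system on the finite-dimensional space $V_h$, existence and uniqueness of the solution are equivalent, so it suffices to show that the associated homogeneous problem admits only the trivial solution. Concretely, I would show that $\bar{a}(\bu,\bu)=0$ forces $\bu=\mathbf{0}$; because $\bar{a}$ is symmetric and, by \eqref{bilinear_SSE2} together with the positive definiteness of $\bD$, positive semidefinite, this is exactly the statement that $\bar{a}$ is coercive on $V_h$, which yields well-posedness.

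First I would translate $\bar{a}(\bu,\bu)=0$ into pointwise information about the smoothed strain. By Theorem~\ref{Thm:equiv} the energy may be computed with the interpolated field $S_h\bepsilon[\bu]$, and since three-point Gaussian quadrature is exact and $\bD$ is positive definite, $\bar{a}(\bu,\bu)=0$ forces $(S_h\bepsilon[\bu])(\mathrm{G}i)=\mathbf{0}$ at every Gaussian point of every element. Equivalently, one may read off from the computation in the proof of Theorem~\ref{Thm:equiv} that $P_{2,h}P_{1,h}\bepsilon[\bu]$ vanishes on each subregion $T_{2,i}$.

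Second, on a fixed element I would solve the resulting cyclic linear relations among the intermediate smoothed strains $\hat{\bepsilon}^{(i)}$. For the triangular element the relations $\hat{\bepsilon}^{(i-1)}+\hat{\bepsilon}^{(i)}=\mathbf{0}$ ($i=1,2,3$, modulo $3$) immediately give $\hat{\bepsilon}^{(1)}=\hat{\bepsilon}^{(2)}=\hat{\bepsilon}^{(3)}=\mathbf{0}$. Recalling \eqref{smoothed_intermediate}, for an exterior element (whose missing neighbor is replaced by $T$ itself) this reads $\bepsilon_T=\mathbf{0}$, and then across any shared edge the relation $\tfrac{|T|\bepsilon_T+|T'|\bepsilon_{T'}}{|T|+|T'|}=\mathbf{0}$ propagates the vanishing to the neighbor $T'$. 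Since $\Omega$ is connected and bounded, every element is joined to a boundary element through a chain of edge-adjacencies, so $\bepsilon[\bu]\equiv\mathbf{0}$ on $\Omega$. Finally, $\bepsilon[\bu]=\mathbf{0}$ gives $a(\bu,\bu)=0$, and the coercivity of $a(\cdot,\cdot)$ on $V$ (which already incorporates the condition $\bu=\mathbf{0}$ on $\Gamma_D$) forces $\bu=\mathbf{0}$.

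I expect the main obstacle to lie in the quadrilateral case. There the analogous element-local relations $|\widehat{T}_{i-1}|\hat{\bepsilon}^{(i-1)}+|\widehat{T}_i|\hat{\bepsilon}^{(i)}=\mathbf{0}$ form a rank-deficient cyclic system that also admits the alternating solution $|\widehat{T}_i|\hat{\bepsilon}^{(i)}=(-1)^{i}\mathbf{c}$, so a single element no longer forces $\hat{\bepsilon}^{(i)}=\mathbf{0}$. Ruling out this spurious (checkerboard-type) mode is the crux: I would use that the $\hat{\bepsilon}^{(i)}$ are genuine area-weighted averages of subtriangle strains of a \emph{continuous} field in $V_h$ and are shared between neighboring elements, and that on the boundary some intermediate strains reduce to an actual subtriangle strain rather than a free parameter. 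Pinning the alternating mode to zero at the boundary and propagating through the connected mesh, as in the triangular case, should again yield $\bepsilon[\bu]\equiv\mathbf{0}$ and hence $\bu=\mathbf{0}$.
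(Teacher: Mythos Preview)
Your approach is genuinely different from the paper's. The paper disposes of the proposition in two lines: it invokes \cite[Sect.~3.9]{Liu:2010} for a quantitative coercivity estimate of $\bar{a}(\cdot,\cdot)$ on $V_h$ and then applies the Lax--Milgram theorem via Theorem~\ref{Thm:equiv}. You instead exploit finite-dimensionality to reduce to kernel triviality and give an explicit, self-contained propagation argument. For the 3-node triangular element your argument is correct and pleasantly elementary: the odd-length cyclic system $\hat{\bepsilon}^{(i-1)}+\hat{\bepsilon}^{(i)}=\mathbf{0}$ forces every $\hat{\bepsilon}^{(i)}$ to vanish on every element, and the presence of a boundary edge together with edge-connectivity of the mesh propagates $\bepsilon_T=\mathbf{0}$ throughout. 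This buys you a proof that does not rely on an external reference; conversely, the paper's route yields an $h$-uniform coercivity constant, which is needed anyway for the convergence analysis in Sect.~\ref{Sec:Convergence}.

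For the 4-node quadrilateral element, however, the step you flag as ``the crux'' is a real gap in your outline. The even-length cyclic system on a single element indeed admits the alternating solution $|\widehat{T}_i|\hat{\bepsilon}^{(i)}=(-1)^i\mathbf{c}$, and the mechanism you propose for killing it does not obviously work: on a boundary edge one only learns $\hat{\bepsilon}^{(i)}=\bepsilon|_{\widehat{T}_i}$, which merely identifies $\mathbf{c}$ with $\pm|\widehat{T}_i|\,\bepsilon|_{\widehat{T}_i}$ rather than forcing $\mathbf{c}=\mathbf{0}$; and the sharing of $\hat{\bepsilon}^{(i)}$ across an interior edge relates the modes $\mathbf{c}_T$, $\mathbf{c}_{T'}$ of neighboring elements without annihilating either. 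To close the argument you would have to use in an essential way the structure of $V_h$---continuity across edges together with the center-point constraint $\bu(\bx_{T,0})=\tfrac14\sum_{i=1}^4\bu(\bx_{T,i})$---to show that no nonzero displacement can generate such an alternating pattern of subtriangle-strain averages globally. That is plausible but is not the one-line boundary-pinning you describe; the paper sidesteps the issue entirely by appealing to the general coercivity argument for smoothed bilinear forms.
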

\begin{proof}
The coercivity of the bilinear form $\bar{a}(\cdot, \cdot)$ in~\eqref{bilinear_SSE2} can be proven using the same argument as~\cite[Sect.~3.9]{Liu:2010}.
Subsequently, the uniqueness of a solution of~\eqref{weak_SSE} is straightforward by Theorem~\eqref{Thm:equiv} and the Lax--Milgram theorem~\cite[Theorem~2.7.7]{BS:2008}.
\end{proof}

\begin{remark}
Subdivisions $\cT_{1,h}$ and $\cT_{2,h}$ introduced in this section are not required in the implementation of the SSE method; they are for the sake of convergence analysis only.
In implementation, the SSE method only requires the original mesh~$\cT_h$~\cite{LL:2018}, while existing S-FEMs require additional subdivisions consisting of smoothing domains~\cite{LNN:2010,ZL:2018}.
\end{remark}

\section{A variational principle for the strain-smoothed element method}
\label{Sec:VP}
In this section, we construct a variational principle for linear elasticity with respect to a single displacement field, two strain fields, and two stress fields.
Subsequently, we demonstrate that the SSE method interpreted by the bilinear form~\eqref{bilinear_SSE2} is a Galerkin approximation of the constructed variational principle.
We note that, while S-FEM can be interpreted in terms of existing variational principles such as the Hellinger--Reissner and Hu--Washizu variational principles~(see~\cite{LNN:2010} and~~\cite{NBN:2008}, respectively), we are unable to derive the SSE method from those principles owing to the additional unification procedures introduced in Sect.~\ref{Sec:SSE}.
Throughout this section, let index $k$ denote either $1$ or $2$.

The starting point is the minimization problem~\eqref{weak_min}.
We set $W_k = W$.
Consider two independent strain fields $\bepsilon_1 \in W_1$ and $\bepsilon_2 \in W_2$.
It is obvious that~\eqref{weak_min} is equivalent to the following constrained minimization problem:
\begin{equation}
\label{epsilon_constrained}
\min_{\bu \in V, \textrm{ }\bepsilon_1 \in W_1, \textrm{ }\bepsilon_2 \in W_2} \left\{ \frac{1}{2}  \intO \bD \bepsilon_2 : \bepsilon_2 \,d\Omega - f (\bu) \right\}
\quad\textrm{subject to}\quad \bepsilon_1 = \bB \bu \textrm{ and } \bepsilon_1 = \bepsilon_2.
\end{equation}
In~\eqref{epsilon_constrained}, we use the method of Lagrange multipliers in order to deal with the constraints $\bepsilon_1 = \bB \bu$ and $\bepsilon_1 = \bepsilon_2$.
Then, we obtain the following saddle point problem:
\begin{multline}
\label{epsilon_saddle}
\min_{\bu \in V, \textrm{ }\bepsilon_1 \in W_1, \textrm{ }\bepsilon_2 \in W_2} \max_{\bsigma_1 \in W_1, \textrm{ }\bsigma_2 \in W_2} \Bigg\{ \frac{1}{2}  \intO \bD \bepsilon_2 : \bepsilon_2 \,d\Omega - f (\bu)
+ \intO \bsigma_1: (\bB \bu - \bepsilon_1) \,d\Omega + \intO \bsigma_2 : (\bepsilon_1 - \bepsilon_2) \,d\Omega \Bigg\},
\end{multline}
where $\bsigma_1 \in W_1$ and $\bsigma_2 \in W_2$ are the Lagrange multipliers corresponding to the constraints $\bepsilon_1 = \bB \bu$ and $\bepsilon_1 = \bepsilon_2$, respectively.
Equivalently, we have the following variational problem: find $(\bu, \bepsilon_1, \bepsilon_2, \bsigma_1, \bsigma_2 ) \in V \times W_1 \times W_2 \times W_1 \times W_2$ such that
\begin{equation}
\begin{split}
\label{VP}
\intO \bsigma_1 : \bB \bv \,d\Omega + \intO (-\bsigma_1 + \bsigma_2 ) : \bdelta_1 \,d\Omega + \intO ( \bD \bepsilon_2 - \bsigma_2) : \bdelta_2 \,d\Omega = f (\bv) \quad \forall &\bv \in V, \gap \bdelta_1 \in W_1, \gap \bdelta_2 \in W_2, \\
\intO \btau_1 : (\bB \bu - \bepsilon_1 ) \,d\Omega + \intO \btau_2 : (\bepsilon_1 - \bepsilon_2) \,d\Omega = 0 \quad \forall &\btau_1 \in W_1, \gap \btau_2 \in W_2.
\end{split}
\end{equation}
The existence and uniqueness of a solution of the variational principle~\eqref{VP} are summarized in Proposition~\ref{Prop:VP}.
We postpone the proof of Proposition~\ref{Prop:VP} until Sect.~\ref{Sec:Convergence}.
A more general statement will be given in Proposition~\ref{Prop:abstract_VP}.

\begin{proposition}
\label{Prop:VP}
The variational problem~\eqref{VP} has a unique solution $(\bu, \bepsilon_1, \bepsilon_2, \bsigma_1, \bsigma_2 ) \in V \times W_1 \times W_2 \times W_1 \times W_2$.
Moreover, $\bu$ solves~\eqref{weak} and the following relations hold:
\begin{equation*}
\bepsilon_1 = \bepsilon_2 = \bB \bu, \quad \bsigma_1 = \bsigma_2 = \bD \bB \bu.
\end{equation*}
\end{proposition}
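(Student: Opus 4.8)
The plan is to exploit the fact that here the multiplier and auxiliary strain spaces are the \emph{full} space, $W_1 = W_2 = W = (L^2(\Omega))^3$, so that every $L^2$-pairing against an arbitrary test field forces a pointwise (a.e.) identity rather than merely a projected one. First I would decouple the coupled system~\eqref{VP} into its individual relations by inserting suitably localized test functions. Testing the first line with $\bdelta_1 = \bdelta_2 = \mathbf{0}$ and arbitrary $\bv \in V$ yields $\intO \bsigma_1 : \bB \bv \,d\Omega = f(\bv)$; taking $\bv = \mathbf{0}$, $\bdelta_2 = \mathbf{0}$ and arbitrary $\bdelta_1 \in W_1$ gives $\bsigma_1 = \bsigma_2$; and taking $\bv = \mathbf{0}$, $\bdelta_1 = \mathbf{0}$ with arbitrary $\bdelta_2 \in W_2$ gives $\bsigma_2 = \bD \bepsilon_2$. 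Likewise, testing the second line separately against $\btau_1 \in W_1$ and against $\btau_2 \in W_2$ yields $\bepsilon_1 = \bB \bu$ and $\bepsilon_1 = \bepsilon_2$, respectively.

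Next I would assemble these relations. Chaining them gives $\bepsilon_1 = \bepsilon_2 = \bB \bu$ and $\bsigma_1 = \bsigma_2 = \bD \bepsilon_2 = \bD \bB \bu$, which is precisely the claimed characterization. Substituting $\bsigma_1 = \bD \bB \bu$ into the relation $\intO \bsigma_1 : \bB \bv \,d\Omega = f(\bv)$ recovers $a(\bu, \bv) = f(\bv)$ for all $\bv \in V$, i.e.\ the displacement problem~\eqref{weak}. Thus any solution of~\eqref{VP} projects onto a solution of~\eqref{weak}, and all five components are then uniquely pinned down by $\bu$ through the stated formulas; uniqueness for~\eqref{VP} therefore follows from the unique solvability of~\eqref{weak} guaranteed by the Lax--Milgram theorem.

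For existence, I would run the construction in reverse: let $\bu \in V$ be the unique solution of~\eqref{weak} and define $\bepsilon_1 = \bepsilon_2 = \bB \bu$ and $\bsigma_1 = \bsigma_2 = \bD \bB \bu$. A direct substitution confirms that this tuple satisfies both lines of~\eqref{VP}: the constraint line holds because $\bB \bu - \bepsilon_1 = \bepsilon_1 - \bepsilon_2 = \mathbf{0}$, while in the first line the contributions $(-\bsigma_1 + \bsigma_2) : \bdelta_1$ and $(\bD \bepsilon_2 - \bsigma_2) : \bdelta_2$ vanish identically, leaving $\intO \bsigma_1 : \bB \bv \,d\Omega = a(\bu, \bv) = f(\bv)$.

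The argument above is almost entirely bookkeeping; the one point requiring care is the legitimacy of isolating each relation, which hinges precisely on $W_1$ and $W_2$ being the entire space $(L^2(\Omega))^3$ rather than proper subspaces, so that $\intO \btau : \bg \,d\Omega = 0$ for all $\btau$ implies $\bg = \mathbf{0}$. I do not anticipate a genuine obstacle in this continuous setting. The real difficulty is deferred to the general framework: once $W_1, W_2$ are replaced by strict finite element subspaces, as in the discrete SSE formulation, the pairings no longer enforce pointwise identities and this clean decoupling collapses, so well-posedness must instead be obtained through the inf--sup and coercivity machinery for mixed variational problems, which is the content of the more general Proposition~\ref{Prop:abstract_VP}.
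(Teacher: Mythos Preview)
Your proof is correct and follows essentially the same route as the paper: the paper defers Proposition~\ref{Prop:VP} to the abstract Proposition~\ref{Prop:abstract_VP}, whose proof likewise isolates each relation by zeroing out all but one test field (e.g., $v=0$, $\delta_2=0$ to get $\sigma_1=\sigma_2$; $v=0$, $\delta_1=0$ to get $\sigma_2=DBu$), identifies the constraint kernel as $\{(u,Bu,Bu)\}$, and reduces to the displacement problem via Lax--Milgram. Your explicit existence check by direct substitution is, if anything, slightly more self-contained than the paper's abstract phrasing.
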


\begin{remark}
\label{Rem:Lagrange}
Proposition~\ref{Prop:VP} shows that the Lagrange multipliers $\bsigma_1$ and $\bsigma_2$ introduced in~\eqref{epsilon_saddle} play a role of the stress field.
\end{remark}

\begin{remark}
\label{Rem:HW}
The elimination of two variables $\bepsilon_2$ and $\bsigma_2$ in~\eqref{epsilon_saddle} yields
\begin{equation*}
\min_{\bu \in V, \textrm{ }\bepsilon_1 \in W_1} \max_{\bsigma_1 \in W_1} \Bigg\{ \frac{1}{2}  \intO \bD \bepsilon_1 : \bepsilon_1 \,d\Omega - f (\bu)
+ \intO \bsigma_1: (\bB \bu - \bepsilon_1) \,d\Omega \Bigg\},
\end{equation*}
which is the Hu--Washizu variational principle.
In this sense, we can say that~\eqref{VP} generalizes the Hu--Washizu variational principle.
\end{remark}

\subsection{Galerkin approximation}
We consider a Galerkin approximation of~\eqref{VP} made by replacing the spaces $V$ and $W_k$ by their finite-dimensional subspaces $V_h \subset V$ and $W_{k,h} \subset W_k$, respectively~(see Sect.~\ref{Sec:SSE} for the definitions of $V_h$ and $W_{k,h}$): find $(\bar{\bu}_h, \bepsilon_{1,h}, \bepsilon_{2,h}, \bsigma_{1,h}, \bsigma_{2,h} ) \in V_h \times W_{1,h} \times W_{2,h} \times W_{1,h} \times W_{2,h}$ such that
\begin{subequations}
\label{Galerkin}
\begin{footnotesize}
\begin{align}
\label{Galerkin1}
\intO \bsigma_{1,h} : \bB \bv \,d\Omega + \intO (-\bsigma_{1,h} + \bsigma_{2,h} ) : \bdelta_1 \,d\Omega + \intO ( \bD \bepsilon_{2,h} - \bsigma_{2,h}) : \bdelta_2 \,d\Omega = f (\bv) \quad \forall &\bv \in V_h, \gap \bdelta_1 \in W_{1,h}, \gap \bdelta_2 \in W_{2,h},\\
\label{Galerkin2}
\intO \btau_1 : (\bB \bar{\bu}_h - \bepsilon_{1,h} ) \,d\Omega + \intO \btau_2 : (\bepsilon_{1,h} - \bepsilon_{2,h}) \,d\Omega = 0 \quad \forall &\btau_1 \in W_{1,h}, \gap \btau_2 \in W_{2,h}.
\end{align}
\end{footnotesize}
\end{subequations}
We take $\bv = \mathbf{0}$ and $\bdelta_2 = \mathbf{0}$ in~\eqref{Galerkin1}.
Then we have
\begin{equation*}
\intO (-\bsigma_{1,h} + \bsigma_{2,h} ) : \bdelta_1 \,d\Omega = 0 \quad \forall \bdelta_1 \in W_{1,h},
\end{equation*}
which implies that $\bsigma_{1,h}$ is the $(L^2 (\Omega))^3$-orthogonal projection of $\bsigma_{2,h}$ onto $W_{1,h}$.
It follows by Lemma~\ref{Lem:orthogonal} that
\begin{equation*}
\bsigma_{1,h} = P_{1,h} \bsigma_{2,h}.
\end{equation*}
Similarly, it is straightforward to verify that
\begin{equation*}
\bsigma_{2,h} = \bD \bepsilon_{2,h}
\end{equation*}
from~\eqref{Galerkin1} and that
\begin{equation*}
\bepsilon_{1,h} = P_{1,h} ( \bB \bar{\bu}_h ), \quad
\bepsilon_{2,h} = P_{2,h} \bepsilon_{1,h}
\end{equation*}
from~\eqref{Galerkin2}.
Using the above relations and Lemmas~\ref{Lem:commute} and~\ref{Lem:orthogonal}, we readily get
\begin{equation*} 
\bsigma_{1,h} = P_{1,h} \bD P_{2,h} P_{1,h} (\bB \bar{\bu}_h ) = P_{1,h} P_{2,h} \left( \bD P_{2,h} P_{1,h} (\bB \bar{\bu}_h) \right).
\end{equation*}
Substituting $\bdelta_1 = \mathbf{0}$ and $\bdelta_2 = \mathbf{0}$ in~\eqref{Galerkin1} yields
\begin{equation*}
\intO \bD P_{2,h} P_{1,h} (\bB\bar{\bu}_h ) : P_{2,h} P_{1,h} ( \bB \bv ) \,d\Omega = f (\bv) \quad \forall \bv \in V_h,
\end{equation*}
that is equivalent to~\eqref{weak_SSE} with the bilinear form $\bar{a}(\cdot, \cdot)$ given in~\eqref{bilinear_SSE2}.
Therefore, the SSE method can be derived from the variational principle~\eqref{VP}.
We summarize the above discussion in the following theorem.
Note that the uniqueness of the solution of the SSE method was presented in Proposition~\ref{Prop:unique}.

\begin{theorem}
\label{Thm:VP}
The variational problem~\eqref{Galerkin} has a unique solution $(\bar{\bu}_h, \bepsilon_{1,h}, \bepsilon_{2,h}, \bsigma_{1,h}, \bsigma_{2,h} ) \in V_h \times W_{1,h} \times W_{2,h} \times W_{1,h} \times W_{2,h}$ that satisfies
\begin{equation*}
\bepsilon_{1,h} = P_{1,h}(\bB \bar{\bu}_h), \quad
\bepsilon_{2,h} = P_{2,h} P_{1,h}(\bB \bar{\bu}_h), \quad
\bsigma_{1,h} = P_{1,h} P_{2,h} \left( \bD P_{2,h} P_{1,h} (\bB \bar{\bu}_h) \right), \quad
\bsigma_{2,h} = \bD P_{2,h} P_{1,h}( \bB \bar{\bu}_h),
\end{equation*}
and $\bar{\bu}_h$ is a unique solution of~\eqref{weak_SSE} with the bilinear form $\bar{a}(\cdot, \cdot)$ given in~\eqref{bilinear_SSE2}.
\end{theorem}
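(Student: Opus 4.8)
The plan is to build directly on the chain of test-function manipulations carried out immediately before the statement, which already performs almost all of the work. The proof splits into a forward reduction that pins down the form of any solution, a uniqueness argument delegated to Proposition~\ref{Prop:unique}, and an existence step that constructs the solution from the unique SSE displacement.

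For the forward reduction I would assume a solution $(\bar{\bu}_h, \bepsilon_{1,h}, \bepsilon_{2,h}, \bsigma_{1,h}, \bsigma_{2,h})$ of~\eqref{Galerkin} and recover the four closed-form relations by specializing test functions. Choosing $\bv = \mathbf{0}$, $\bdelta_2 = \mathbf{0}$ in~\eqref{Galerkin1} and invoking Lemma~\ref{Lem:orthogonal} gives $\bsigma_{1,h} = P_{1,h}\bsigma_{2,h}$; choosing $\bv = \mathbf{0}$, $\bdelta_1 = \mathbf{0}$ gives $\bsigma_{2,h} = \bD\bepsilon_{2,h}$; and splitting the two test functions in~\eqref{Galerkin2} yields $\bepsilon_{1,h} = P_{1,h}(\bB\bar{\bu}_h)$ together with $\bepsilon_{2,h} = P_{2,h}\bepsilon_{1,h}$. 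Composing these and using Lemma~\ref{Lem:commute} to commute the constant matrix $\bD$ past $P_{2,h}$ produces exactly the displayed expressions for $\bepsilon_{1,h}$, $\bepsilon_{2,h}$, $\bsigma_{1,h}$, and $\bsigma_{2,h}$ in terms of $\bar{\bu}_h$.

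It then remains to isolate the displacement equation and identify it with the SSE method. Setting $\bdelta_1 = \bdelta_2 = \mathbf{0}$ in~\eqref{Galerkin1} leaves $\intO \bsigma_{1,h} : \bB\bv\,d\Omega = f(\bv)$ for all $\bv \in V_h$; inserting $\bsigma_{1,h} = P_{1,h}\bD P_{2,h}P_{1,h}(\bB\bar{\bu}_h)$, noting that $\bD P_{2,h}P_{1,h}(\bB\bar{\bu}_h) \in W_{2,h}$, and transferring both projections onto the test slot by the self-adjointness in Lemma~\ref{Lem:orthogonal} reduces this to
\[
\intO \bD P_{2,h}P_{1,h}(\bB\bar{\bu}_h) : P_{2,h}P_{1,h}(\bB\bv)\,d\Omega = f(\bv) \quad \forall \bv \in V_h,
\]
which is precisely~\eqref{weak_SSE} with the bilinear form~\eqref{bilinear_SSE2}. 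Uniqueness now follows immediately: Proposition~\ref{Prop:unique} furnishes a unique $\bar{\bu}_h$ solving this equation, and each remaining field is uniquely determined by $\bar{\bu}_h$ through the relations just derived. For existence I would take that unique $\bar{\bu}_h$, define the four auxiliary fields by the stated formulas, and check that both~\eqref{Galerkin1} and~\eqref{Galerkin2} hold for all admissible test functions.

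The argument contains no genuinely hard estimate; the only point demanding care is confirming that the forward reduction is reversible, i.e., that the tuple assembled from $\bar{\bu}_h$ satisfies the full coupled system~\eqref{Galerkin} and not merely the reduced displacement equation used to derive the formulas. This verification is pure bookkeeping with the projection identities of Lemmas~\ref{Lem:commute} and~\ref{Lem:orthogonal}, keeping track of which slot each $P_{k,h}$ and $\bD$ acts on; I expect this to be the main place where one must be slightly careful rather than any conceptual difficulty.
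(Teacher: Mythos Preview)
Your proposal is correct and follows essentially the same route as the paper: the paper's argument is precisely the test-function specialization you describe, carried out in the paragraphs preceding the theorem and then summarized, with uniqueness delegated to Proposition~\ref{Prop:unique}. Your plan is in fact slightly more thorough, since you explicitly separate the existence direction (verifying that the tuple built from $\bar{\bu}_h$ satisfies the full system~\eqref{Galerkin}) from the forward reduction, whereas the paper leaves this reversibility implicit.
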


\section{Convergence analysis}
\label{Sec:Convergence}
In this section, we present a convergence theory for the SSE method based on the variational formulation~\eqref{VP}.
To present a unified convergence analysis for the standard FEM, S-FEM, and SSE method, the convergence theory established in this section is built upon an abstract mixed problem that generalizes~\eqref{VP}.

Let $X$ and $Y$ be two Hilbert spaces equipped with inner products $\left< \cdot, \cdot \right>_X$ and $\left< \cdot, \cdot \right>_Y$ and their induced norms $\| \cdot \|_X$ and $\| \cdot \|_Y$, respectively.
We set $\Pi = X \times Y \times Y$ and $\Delta = Y \times Y$.
Let $D$:~$Y \rightarrow Y$ be a continuous and symmetric positive definite linear operator, so that
\begin{equation*}
\vertiii{ \epsilon }_{Y} = \langle D \epsilon, \epsilon \rangle_Y^{1/2}, \quad \epsilon \in Y
\end{equation*}
becomes a norm on $Y$.
In this case, the dual norm $\vertiii{\cdot}_{Y^*}$ of $\vertiii{\cdot}_Y$ is given as follows:
\begin{equation*}
\vertiii{\sigma}_{Y^*} = \sup_{\delta \in Y \setminus \{0\}} \frac{\left< \sigma, \delta \right>_Y}{\vertiii{\delta}_Y} = \langle \sigma, D^{-1} \sigma \rangle_Y^{1/2}, \quad \sigma \in Y.
\end{equation*}
We additionally assume that there is a continuous linear operator $B$:~$X \rightarrow Y$ such that
\begin{equation*}
\vertiii{u}_X = \vertiii{Bu}_{Y}, \quad u \in X
\end{equation*}
becomes a norm on $X$.
The following norms on the spaces $\Pi$ and $\Delta$ are defined:
\begin{equation*} \begin{split}
\vertiii{U}_{\Pi}^2 &= \vertiii{u}_X^2 + \vertiii{\epsilon_1}_Y^2 + \vertiii{\epsilon_2}_Y^2, \quad U = (u, \epsilon_1, \epsilon_2 ) \in \Pi, \\
\vertiii{P}_{\Delta}^2 &= \vertiii{\epsilon_1}_{Y}^2 + \vertiii{\epsilon_2}_{Y}^2, \quad \hspace{1.05cm} P = (\epsilon_1, \epsilon_2) \in \Delta, \\
\vertiii{Q}_{\Delta^*}^2 &= \vertiii{\sigma_1}_{Y^*}^2 + \vertiii{\sigma_2}_{Y^*}^2, \quad \hspace{0.6cm} Q = (\sigma_1, \sigma_2) \in \Delta.
\end{split} \end{equation*}
We also define a seminorm $| \cdot |_{\Pi}$ on $\Pi$ as follows:
\begin{equation*}
|U|_{\Pi} = \vertiii{\epsilon_2}_Y, \quad U = (u, \epsilon_1, \epsilon_2 ) \in \Pi.
\end{equation*}

Let $\cD$:~$\Pi \rightarrow \Delta$ be a linear operator given by
\begin{equation}
\label{cD}
\cD U = (Bu - \epsilon_1, \epsilon_1 - \epsilon_2), \quad U=(u, \epsilon_1, \epsilon_2) \in \Pi.
\end{equation}
In terms of the operator $\cD$, we define the bilinear form $B(\cdot, \cdot)$:~$\Pi \times \Delta \rightarrow \mathbb{R}$ as follows:
\begin{equation*}
B(V, Q) = \left< \cD V, Q \right>_{\Delta} = \left< \tau_1 , Bv - \delta_1 \right>_Y + \left< \tau_2 , \delta_1 - \delta_2 \right>_Y, \quad V=(v, \delta_1, \delta_2 ) \in \Pi, \gap Q=(\tau_1, \tau_2) \in \Delta.
\end{equation*}
It is straightforward to verify that the kernel $Z$ of $B(\cdot, \Delta)$ defined by
\begin{equation}
\label{Z}
Z = \left\{ V \in \Pi :\gap B(V, Q) = 0, \gap Q \in \Delta \right\}
\end{equation}
is characterized as follows:
\begin{equation}
\label{Z_char}
Z = \left\{ (v, Bv, Bv) \in \Pi :\gap v \in X \right\}.
\end{equation}
The seminorm $| \cdot |_{\Pi}$ is positive definite on $Z$ since
\begin{equation}
\label{Z_positive}
|U|_{\Pi}^2 = \vertiii{Bu}_Y^2 = \frac{1}{3} \vertiii{U}_{\Pi}^2, \quad U =( u, Bu, Bu) \in Z.
\end{equation}
In other words, $| \cdot |_{\Pi}$ becomes a norm on $Z$.

If we define a bilinear form $A(\cdot, \cdot)$:~$\Pi \times \Pi \rightarrow \mathbb{R}$ by
\begin{equation*}
A(U,V) = \langle D \epsilon_2, \delta_2 \rangle_Y, \quad U=(u,\epsilon_1,\epsilon_2), \gap V=(v, \delta_1, \delta_2) \in \Pi,
\end{equation*}
then it is continuous and coercive with respect to $| \cdot |_{\Pi}$ since
\begin{equation} \begin{split}
\label{A_continuous}
A(U,V) = \langle D \epsilon_2, \delta_2 \rangle_Y
\leq \vertiii{\epsilon_2}_Y \vertiii{\delta_2}_Y 
= | U |_{\Pi} | V |_{\Pi}
\end{split} \end{equation}
and
\begin{equation} \begin{split}
\label{A_coercive}
A(U, U) = \vertiii{\epsilon_2}_Y^2 = |U|_{\Pi}^2 
\end{split} \end{equation}
for any $U=(u,\epsilon_1, \epsilon_2),V=(v, \delta_1, \delta_2) \in \Pi$.

We are now ready to state the following abstract variational problem to find $U \in \Pi$ and $P \in \Delta$ such that
\begin{subequations}
\label{abstract_VP}
\begin{align}
\label{abstract_VP1}
A(U,V) + B(V,P) &= F(V) \quad \forall V \in \Pi, \\
\label{abstract_VP2}
B(U,Q) &= 0 \hspace{0.6cm} \quad \forall Q \in \Delta,
\end{align}
\end{subequations}
where $F \in \Pi^*$ satisfies
\begin{equation*}
F(V) = f(v), \quad V = (v, \delta_1, \delta_2) \in \Pi,
\end{equation*}
for some $f \in X^*$.
The existence and uniqueness of a solution of~\eqref{abstract_VP} can be shown as follows.

\begin{proposition}
\label{Prop:abstract_VP}
The variational problem~\eqref{abstract_VP} has a unique solution $(U, P) \in \Pi \times \Delta$.
Moreover, the unique solution $(U, P)$ is characterized by
\begin{equation*}
U = (u, Bu, Bu), \quad P = (DBu, DBu),
\end{equation*}
where $u \in X$ is a unique solution of the variational problem
\begin{equation}
\label{reduced_VP}
\langle D B u, B v \rangle_Y = f(v) \quad \forall v \in X.
\end{equation}
\end{proposition}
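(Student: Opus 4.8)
The plan is to recognize~\eqref{abstract_VP} as a saddle-point problem of the canonical form treated by the Babu\v{s}ka--Brezzi theory and to invoke the standard well-posedness theorem for mixed variational problems~\cite{BS:2008,BBF:2013}. That theorem guarantees a unique solution $(U,P) \in \Pi \times \Delta$ once four ingredients are in place: (i) continuity of $A(\cdot,\cdot)$ on $\Pi \times \Pi$; (ii) continuity of $B(\cdot,\cdot)$ on $\Pi \times \Delta$; (iii) coercivity of $A(\cdot,\cdot)$ on the kernel $Z$ defined in~\eqref{Z}; and (iv) the inf-sup (LBB) condition for $B(\cdot,\cdot)$. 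Ingredients (i) and (iii) are essentially already recorded in the excerpt: continuity follows from~\eqref{A_continuous}, while the identity~\eqref{A_coercive} combined with~\eqref{Z_positive} gives $A(U,U) = |U|_{\Pi}^2 = \tfrac{1}{3}\vertiii{U}_{\Pi}^2$ for $U \in Z$, so that $A$ is coercive on $Z$ with respect to $\vertiii{\cdot}_{\Pi}$. Continuity of $B$ is immediate from the Cauchy--Schwarz inequality and the definitions of $\vertiii{\cdot}_{\Pi}$ and $\vertiii{\cdot}_{\Delta^*}$.

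The main step is to verify the inf-sup condition, namely the existence of $\beta > 0$ with $\sup_{V \in \Pi \setminus \{0\}} B(V,Q)/\vertiii{V}_{\Pi} \geq \beta\, \vertiii{Q}_{\Delta^*}$ for every $Q = (\sigma_1, \sigma_2) \in \Delta$. Here the product structure of $\cD$ in~\eqref{cD} makes the construction explicit: given $Q$, I would choose $V = (v, \delta_1, \delta_2) \in \Pi$ with $v = 0$, $\delta_1 = -D^{-1}\sigma_1$, and $\delta_2 = -D^{-1}\sigma_1 - D^{-1}\sigma_2$. Since $v = 0$ leaves $\delta_1, \delta_2$ entirely free, this $V$ is admissible, and a direct computation gives $Bv - \delta_1 = D^{-1}\sigma_1$ and $\delta_1 - \delta_2 = D^{-1}\sigma_2$, whence $B(V,Q) = \langle \sigma_1, D^{-1}\sigma_1 \rangle_Y + \langle \sigma_2, D^{-1}\sigma_2 \rangle_Y = \vertiii{\sigma_1}_{Y^*}^2 + \vertiii{\sigma_2}_{Y^*}^2 = \vertiii{Q}_{\Delta^*}^2$. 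On the other hand, using $\vertiii{D^{-1}\sigma_i}_Y = \vertiii{\sigma_i}_{Y^*}$ together with the triangle inequality, one bounds $\vertiii{V}_{\Pi} \leq C\, \vertiii{Q}_{\Delta^*}$ for an absolute constant $C$. Dividing the two estimates yields the inf-sup condition with $\beta = 1/C$. This is the only place where the precise structure of the norms $\vertiii{\cdot}_{Y^*}$ and $\vertiii{\cdot}_{\Delta^*}$ enters, and I expect it to be the main (though not severe) obstacle.

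With (i)--(iv) established, the Babu\v{s}ka--Brezzi theorem yields a unique pair $(U,P)$. To obtain the stated characterization, I would first use~\eqref{abstract_VP2}, which says $B(U,Q) = 0$ for all $Q \in \Delta$, i.e.\ $U \in Z$; by~\eqref{Z_char} this forces $U = (u, Bu, Bu)$ for some $u \in X$. Testing~\eqref{abstract_VP1} against $V \in Z$ (for which $B(V,P) = 0$) and using $A(U,V) = \langle DBu, Bv \rangle_Y$ and $F(V) = f(v)$ reduces the first equation to~\eqref{reduced_VP}; since $\langle DB\cdot, B\cdot \rangle_Y$ is continuous and coercive (indeed $\langle DBu, Bu \rangle_Y = \vertiii{u}_X^2$), the Lax--Milgram theorem~\cite{BS:2008} supplies a unique $u \in X$. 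Finally, I would verify by direct substitution that $P = (DBu, DBu)$ satisfies~\eqref{abstract_VP1} for all $V = (v, \delta_1, \delta_2) \in \Pi$: the terms involving $\delta_2$ cancel and the remainder collapses to $\langle DBu, Bv \rangle_Y = f(v)$ by~\eqref{reduced_VP}. Uniqueness of $P$, which follows from the inf-sup condition, then confirms that this is the solution, completing the characterization and, as the special case $X = V$, $Y = W$, $B = \bB$, $D = \bD$, establishing Proposition~\ref{Prop:VP}.
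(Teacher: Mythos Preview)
Your proof is correct, but it takes a heavier route than the paper. The paper never invokes the full Babu\v{s}ka--Brezzi theorem and does not verify an inf-sup condition at all. Instead, it proceeds elementarily: from~\eqref{abstract_VP2} one gets $U\in Z$, and then Lax--Milgram is applied directly to the reduced problem~\eqref{Z_VP} on $Z$ (using~\eqref{Z_positive},~\eqref{A_continuous},~\eqref{A_coercive}) to obtain a unique $U=(u,Bu,Bu)$. The dual variable $P=(\sigma_1,\sigma_2)$ is then pinned down \emph{directly} by substituting specific test functions in~\eqref{abstract_VP1}: taking $v=0$, $\delta_2=0$ forces $\sigma_1=\sigma_2$, and taking $v=0$, $\delta_1=0$ forces $\sigma_2=DBu$. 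This simultaneously proves uniqueness of $P$ and its characterization, with existence following by a trivial back-substitution. Your approach buys you a more systematic framework (and your inf-sup construction with $v=0$, $\delta_1=-D^{-1}\sigma_1$, $\delta_2=-D^{-1}(\sigma_1+\sigma_2)$ is clean and correct), at the cost of an extra verification step that the paper sidesteps entirely; the paper's route is shorter and more self-contained, though less aligned with the standard mixed-method machinery you later need for the error analysis anyway.
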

\begin{proof}
The existence and uniqueness of a solution of~\eqref{reduced_VP} are direct consequences of the Lax--Milgram theorem~\cite[Theorem~2.7.7]{BS:2008}.
The equation~\eqref{abstract_VP2} implies that $U \in Z$.
By~\eqref{abstract_VP1}, $U$ can be determined by the following variational problem: find $U \in Z$ such that
\begin{equation}
\label{Z_VP}
A(U,V) = F(V) \quad \forall V \in Z.
\end{equation}
Because $|\cdot|_{\Pi}$ is a norm on $Z$~(see~\eqref{Z_positive}), the existence and uniqueness of $U$ are guaranteed by~\eqref{A_continuous},~\eqref{A_coercive}, and the Lax--Milgram theorem applied to~\eqref{Z_VP}.
By~\eqref{Z_char}, we have $U=(u, Bu, Bu)$ for some $u \in X$.
Writing $V = (v, Bv, Bv)$ for $v \in X$, the problem~\eqref{Z_VP} is reduced to~\eqref{reduced_VP}.
Therefore, $u$ is a unique solution of~\eqref{reduced_VP}.

Next, we characterize the dual solution $P$.
We write $V = (v, \delta_1, \delta_2 )$ and $P = (\sigma_1, \sigma_2)$ in~\eqref{abstract_VP1}.
Substituting $v=0$ and $\delta_2 = 0$ in~\eqref{abstract_VP1} yields
\begin{equation*}
\left< \sigma_1 - \sigma_2 , \delta_1 \right>_Y = 0 \quad \forall \delta_1 \in Y,
\end{equation*}
which is equivalent to $\sigma_1 = \sigma_2$.
Meanwhile, by substituting $U=(u, Bu, Bu)$, $v=0$, and $\delta_1 = 0$ in~\eqref{abstract_VP1}, we have
\begin{equation*}
\left< D B u - \sigma_2 , \delta_2 \right>_Y = 0 \quad \forall \delta_2 \in Y.
\end{equation*}
That is, we get $\sigma_2 = DBu$.
Therefore, we conclude that $\sigma_1 = \sigma_2 = DBu$.
\end{proof}

The abstract problem~\eqref{abstract_VP} generalizes several important elliptic partial differential equations.
If we set
\begin{equation*}
X = \left\{ u \in H^1 (\Omega) : u=0 \textrm{ on } \Gamma_D \right\}, \quad Y = L^2 (\Omega), \quad D = I, \quad B = \nabla
\end{equation*}
in~\eqref{abstract_VP}, then~\eqref{reduced_VP} becomes
\begin{equation*}
\intO \nabla u \cdot \nabla v \,d\Omega = f(v) \quad \forall v \in X,
\end{equation*}
which is the weak formulation for the Poisson's equation with a mixed boundary condition~\cite{BS:2008,Ciarlet:2002}.
Meanwhile, if we set
\begin{equation}
\label{abstract_elasticity}
X = V, \quad Y = W, \quad D = \bD, \quad B = \bB,
\end{equation}
where $V$, $W$, $\bD$, and $\bB$ were defined in Sect.~\ref{Sec:Elasticity}, then~\eqref{abstract_VP} and~\eqref{reduced_VP} are reduced to~\eqref{VP} and~\eqref{weak}, respectively.
Therefore, linear elasticity is an instance of~\eqref{abstract_VP}.
In this sense, Proposition~\ref{Prop:abstract_VP} generalizes Proposition~\ref{Prop:VP}.

Now, we present a Galerkin approximation of~\eqref{abstract_VP} which generalizes~\eqref{Galerkin}.
Let $X_h \subset X$, $Y_{1,h} \subset Y$, and $Y_{2,h} \subset Y$.
For $\Pi_h = X_h \times Y_{1,h} \times Y_{2,h}$ and $\Delta_h = Y_{1,h} \times Y_{2,h}$, we consider a variational problem to find $U_h \in \Pi_h$ and $P_h \in \Delta_h$ such that
\begin{equation}
\label{abstract_Galerkin}
\begin{split}
A(U_h,V) + B(V,P_h) &= F(V) \quad \forall V \in \Pi_h, \\
B(U_h,Q) &= 0 \hspace{0.6cm} \quad \forall Q \in \Delta_h.
\end{split} \end{equation}
Similarly to~\eqref{Z}, we define
\begin{equation}
\label{Z_h}
Z_h = \left\{ V \in \Pi_h :\gap B(V, Q) = 0, \gap Q \in \Delta_h \right\}.
\end{equation}
Note that $Z_h \not\subset Z$ in general.
We state an assumption on $Z_h$ that is necessary to obtain a bound for the error $U - U_h$.

\begin{assumption}
\label{Ass:positive}
The seminorm $|\cdot|_{\Pi}$ is positive definite on $Z \cup Z_h$, i.e., there exists a positive constant $\alpha$ such that
\begin{equation*}
|U|_{\Pi} \geq \alpha \vertiii{U}_{\Pi}, \quad U \in Z \cup Z_h.
\end{equation*}
\end{assumption}

Thanks to~\eqref{Z_positive}, it is enough to prove the positive definiteness of $| \cdot |_{\Pi}$ on $Z_h$ in order to verify Assumption~\ref{Ass:positive} in applications.
Under Assumption~\ref{Ass:positive}, the primal solution $U_h$ of~\eqref{abstract_Galerkin} is uniquely determined since it solves
\begin{equation}
\label{Z_Galerkin}
A(U_h,V) = F(V) \quad \forall V \in Z_h.
\end{equation}
Moreover, one can prove the following continuity condition of the bilinear form $B(\cdot, \cdot)$ with respect to $|\cdot|_{\Pi}$.

\begin{lemma}
\label{Lem:B_continuous}
Suppose that Assumption~\ref{Ass:positive} holds.
Then, there exists a positive constant $C_B$ such that
\begin{equation*}
B(V,Q) \leq C_B |V|_{\Pi} \vertiii{Q}_{\Delta^*}, \quad V \in \Pi, \gap P \in \Delta.
\end{equation*}
\end{lemma}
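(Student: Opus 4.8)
The plan is to factor the estimate through the operator $\cD$ and its associated norms, and then to convert the full norm $\vertiii{\cdot}_{\Pi}$ into the seminorm $|\cdot|_{\Pi}$ using Assumption~\ref{Ass:positive}. First I would return to the definition $B(V,Q) = \langle \cD V, Q \rangle_{\Delta}$ and invoke the duality between $\vertiii{\cdot}_{\Delta}$ and $\vertiii{\cdot}_{\Delta^*}$. Writing $\cD V = (Bv-\delta_1,\, \delta_1-\delta_2)$ and $Q=(\tau_1,\tau_2)$, the very definition of the dual norm $\vertiii{\cdot}_{Y^*}$ gives $\langle Bv-\delta_1, \tau_1\rangle_Y \leq \vertiii{Bv-\delta_1}_Y \vertiii{\tau_1}_{Y^*}$ and the analogous bound for the second summand, so a Cauchy--Schwarz inequality in $\mathbb{R}^2$ collapses the two terms into
\begin{equation*}
B(V,Q) = \langle \cD V, Q \rangle_{\Delta} \leq \vertiii{\cD V}_{\Delta}\, \vertiii{Q}_{\Delta^*} .
\end{equation*}

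Next I would establish that $\cD$ is bounded from $(\Pi, \vertiii{\cdot}_{\Pi})$ into $(\Delta, \vertiii{\cdot}_{\Delta})$. Using the defining identity $\vertiii{Bv}_Y = \vertiii{v}_X$ together with the triangle inequality and $(a+b)^2 \leq 2a^2 + 2b^2$, one gets
\begin{equation*}
\vertiii{\cD V}_{\Delta}^2 = \vertiii{Bv-\delta_1}_Y^2 + \vertiii{\delta_1-\delta_2}_Y^2 \leq 4\,\vertiii{V}_{\Pi}^2 ,
\end{equation*}
so that $\vertiii{\cD V}_{\Delta} \leq 2\vertiii{V}_{\Pi}$. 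This step is routine and the precise value of the constant is immaterial for the final result.

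The crucial and only delicate step is passing from the full norm $\vertiii{V}_{\Pi}$ to the seminorm $|V|_{\Pi} = \vertiii{\delta_2}_Y$, which sees only the third component of $V=(v,\delta_1,\delta_2)$. This cannot hold unconditionally on all of $\Pi$: for instance $V=(0,\delta_1,0)$ with $\delta_1 \neq 0$ satisfies $|V|_{\Pi}=0$ while $\cD V = (-\delta_1,\delta_1) \neq 0$, so the estimate must be read for $V$ in the kernel set $Z \cup Z_h$, which is precisely where Assumption~\ref{Ass:positive} supplies $\vertiii{V}_{\Pi} \leq \alpha^{-1}|V|_{\Pi}$. Chaining the three bounds then yields $B(V,Q) \leq 2\alpha^{-1}|V|_{\Pi}\vertiii{Q}_{\Delta^*}$, so that $C_B = 2/\alpha$ works. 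I expect the main obstacle to be conceptual rather than computational, namely recognizing that the seminorm controls $\cD V$ only after Assumption~\ref{Ass:positive} restores equivalence with $\vertiii{\cdot}_{\Pi}$ on $Z \cup Z_h$, which is exactly the set on which this continuity estimate is subsequently applied in the error analysis.
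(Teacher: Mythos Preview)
Your argument is essentially identical to the paper's: first bound $B(V,Q)\le\vertiii{\cD V}_{\Delta}\vertiii{Q}_{\Delta^*}$, then establish $\vertiii{\cD V}_{\Delta}\le 2\vertiii{V}_{\Pi}$, and finally invoke Assumption~\ref{Ass:positive} to replace $\vertiii{V}_{\Pi}$ by $\alpha^{-1}|V|_{\Pi}$, arriving at $C_B=2/\alpha$. Your observation that the bound as stated cannot hold on all of $\Pi$ (your counterexample $V=(0,\delta_1,0)$ is decisive) and must be read on $Z\cup Z_h$ is correct and in fact sharper than the paper, which writes ``$V\in\Pi$'' in the statement but silently applies Assumption~\ref{Ass:positive} in the last line of its proof; since the lemma is only ever invoked with $W\in Z_h$ in Theorem~\ref{Thm:abstract_Galerkin}, this imprecision is harmless downstream.
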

\begin{proof}
First, we show that the operator $\cD$ defined in~\eqref{cD} is bounded.
For any $U=(u, \epsilon_1, \epsilon_2) \in \Pi$, it follows that
\begin{equation} \begin{split}
\label{C_D}
\vertiii{\cD U}_{\Delta}^2 &= \vertiii{Bu - \epsilon_1}_Y^2 + \vertiii{\epsilon_1 - \epsilon_2}_Y^2 \\
&\leq 2 ( \vertiii{Bu}_Y^2 + \vertiii{\epsilon_1}_Y^2 ) + 2 ( \vertiii{\epsilon_1}_Y^2 + \vertiii{\epsilon_2}_Y^2 ) \\
&= 2 \vertiii{u}_X^2 + 4 \vertiii{\epsilon_1}_Y^2 + 2 \vertiii{\epsilon_2}_Y^2 \\
&\leq 4 \vertiii{U}_{\Pi}^2.
\end{split} \end{equation}
Using~\eqref{C_D}, one can obtain the desired result with $C_B = 2/\alpha$ as follows: for $V \in \Pi$ and $Q \in \Delta$, we have
\begin{equation*} \begin{split}
B(V, Q) &= \left< \cD V, Q \right>_{\Delta} \\
&\leq \vertiii{\cD V}_{\Delta} \vertiii{Q}_{\Delta^*} \\
&\stackrel{\eqref{C_D}}{\leq} 2 \vertiii{V}_{\Pi} \vertiii{Q}_{\Delta^*} \\
&\leq \frac{2}{\alpha} |V|_{\Pi} \vertiii{Q}_{\Delta^*},
\end{split} \end{equation*}
where we used Assumption~\ref{Ass:positive} in the last inequality.
\end{proof}

Motivated by~\cite[Theorem~12.3.7]{BS:2008}, we have the following result on the relation between the primal solutions of the variational problem~\eqref{abstract_VP} and its Galerkin approximation~\eqref{abstract_Galerkin}.

\begin{theorem}
\label{Thm:abstract_Galerkin}
Suppose that Assumption~\ref{Ass:positive} holds.
Let $(U, P) \in \Pi \times  \Delta$ be a unique solution of~\eqref{abstract_VP}, and let $U_h \in \Pi_h$ be a unique primal solution of~\eqref{abstract_Galerkin}.
Then we have
\begin{equation*}
| U - U_h |_{\Pi} \leq 2 \inf_{V \in Z_h} | U - V |_{\Pi} + C_B \inf_{Q \in \Delta_h} \vertiii{P-Q}_{\Delta^*},
\end{equation*}
where $C_B$ was defined in Lemma~\ref{Lem:B_continuous}.
\end{theorem}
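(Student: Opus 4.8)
The plan is to run a Strang-type (second Céa) argument adapted to the fact that the discrete kernel $Z_h$ is not contained in $Z$. First I would fix an arbitrary $V \in Z_h$ and split the error as $U - U_h = (U - V) + (V - U_h)$, so that by the triangle inequality for the seminorm $|\cdot|_{\Pi}$ it suffices to bound $|V - U_h|_{\Pi}$, where $W_h := V - U_h$ lies in $Z_h$. The final estimate will then follow by taking the infimum over $V \in Z_h$. The tools are the coercivity~\eqref{A_coercive} and continuity~\eqref{A_continuous} of $A$ in the seminorm, together with the continuity of $B$ furnished by Lemma~\ref{Lem:B_continuous}.

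The core step is an algebraic identity for $|W_h|_{\Pi}^2$. Using $A(W_h, W_h) = |W_h|_{\Pi}^2$ from~\eqref{A_coercive} and the definition of $W_h$, I would write $|W_h|_{\Pi}^2 = A(V, W_h) - A(U_h, W_h)$. Since $U_h$ solves the reduced problem~\eqref{Z_Galerkin} and $W_h \in Z_h$, we have $A(U_h, W_h) = F(W_h)$. On the other hand, testing the continuous equation~\eqref{abstract_VP1} with $W_h \in \Pi_h \subset \Pi$ gives $F(W_h) = A(U, W_h) + B(W_h, P)$. Substituting yields
\[
|W_h|_{\Pi}^2 = A(V - U, W_h) - B(W_h, P).
\]
Here is the crucial point: because $W_h \in Z_h$, the definition~\eqref{Z_h} of $Z_h$ gives $B(W_h, Q) = 0$ for every $Q \in \Delta_h$, so $B(W_h, P) = B(W_h, P - Q)$ for any such $Q$. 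This lets me replace the dual variable $P$, which need not lie in $\Delta_h$, by its best approximation in $\Delta_h$.

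Finally, I would apply the continuity bounds. By~\eqref{A_continuous}, $A(V - U, W_h) \leq |U - V|_{\Pi} |W_h|_{\Pi}$, and by Lemma~\ref{Lem:B_continuous}, $B(W_h, P - Q) \leq C_B |W_h|_{\Pi} \vertiii{P - Q}_{\Delta^*}$. Dividing the resulting inequality by $|W_h|_{\Pi}$ (the case $|W_h|_{\Pi} = 0$ being trivial) gives $|W_h|_{\Pi} \leq |U - V|_{\Pi} + C_B \vertiii{P - Q}_{\Delta^*}$, and the triangle inequality produces $|U - U_h|_{\Pi} \leq 2 |U - V|_{\Pi} + C_B \vertiii{P - Q}_{\Delta^*}$. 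Taking the infimum over $V \in Z_h$ and over $Q \in \Delta_h$ yields the claim. The main obstacle, and the reason the standard conforming Céa argument does not apply directly, is precisely the non-nestedness $Z_h \not\subset Z$: Galerkin orthogonality $A(U - U_h, \cdot) = 0$ fails on $Z_h$, and the consistency defect it leaves behind is exactly the term $B(W_h, P)$, which must be controlled through the dual variable $P$ and its approximation in $\Delta_h$ as above.
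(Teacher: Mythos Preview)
Your argument is correct and is essentially the same as the paper's proof. The only organizational difference is that the paper packages the Strang-type step into an abstract second-Strang lemma (Theorem~\ref{Thm:nonconforming} in the appendix, with $C=\alpha=1$ here) and then bounds the consistency term $\sup_{W\in Z_h}|A(U-U_h,W)|/|W|_{\Pi}$ via~\eqref{abstract_VP1}, \eqref{Z_Galerkin}, \eqref{Z_h}, and Lemma~\ref{Lem:B_continuous}, whereas you carry out the identical computation inline by directly expanding $|V-U_h|_{\Pi}^2$.
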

\begin{proof}
Note that $U$ and $U_h$ solve~\eqref{Z_VP} and~\eqref{Z_Galerkin}, respectively.
Thanks to~\eqref{A_continuous},~\eqref{A_coercive}, and Assumption~\ref{Ass:positive}, one can apply Theorem~\ref{Thm:nonconforming} to obtain
\begin{equation}
\label{ingredient1}
|U - U_h|_{\Pi} \leq 2 \inf_{V \in Z_h} |U-V|_{\Pi} + \sup_{W \in Z_h \setminus \{0 \}} \frac{| A(U-U_h, W) |}{|W|_{\Pi}}.
\end{equation}
Meanwhile, for any $W \in Z_h$ and $Q \in \Delta_h$, we have
\begin{equation} \begin{split}
\label{ingredient2}
| A(U-U_h, W) | &\stackrel{\eqref{Z_Galerkin}}{=} | A(U, W) - F(W)| \\
&\stackrel{\eqref{abstract_VP1}}{=} | B(W, P) | \\
&\stackrel{\eqref{Z_h}}{=} | B(W, P-Q) | \\
&\leq C_B | W |_{\Pi} \vertiii{P-Q}_{\Delta^*},
\end{split}\end{equation}
where the last inequality is due to Lemma~\ref{Lem:B_continuous}.
Combining~\eqref{ingredient1} and~\eqref{ingredient2} yields the desired result.
\end{proof}

As linear elasticity is an instance of the continuous problem~\eqref{abstract_VP}, various FEMs such as the standard FEM, S-FEM, and SSE method for linear elasticity can be written in the form of~\eqref{abstract_Galerkin}.
We present how the convergence results of these methods can be obtained in a unified manner from Theorem~\ref{Thm:abstract_Galerkin}.
In what follows, we assume the setting~\eqref{abstract_elasticity}.
Subsequently, the norms $\vertiii{\cdot}_Y$ and $\vertiii{\cdot}_{Y^*}$ become the energy norms for the strain and stress fields, respectively, i.e.,
\begin{equation*}
\vertiii{ \bepsilon }_Y^2 = \intO \bD \bepsilon : \bepsilon \, d\Omega, \quad \bepsilon \in W,
\end{equation*}
and
\begin{equation*}
\vertiii{ \bsigma }_{Y^*}^2 = \intO \bsigma : \bD^{-1} \bsigma \, d\Omega, \quad \bsigma \in W.
\end{equation*}

\subsection{Standard finite element method}
\label{Subsec:StdFEM}
First, we set $X_h = V_h$ and $Y_{1,h} = Y_{2,h} = W_h$ in~\eqref{abstract_Galerkin}, where the spaces $V_h$ and $W_h$ were defined in Sect.~\ref{Sec:SSE}.
Since the meshes associated with $V_h$ and $W_h$ agree, it satisfies $\bB \bv \in  W_h$ for all $\bv \in V_h$.
Accordingly, the set~$Z_h$ defined in~\eqref{Z_h} is characterized by
\begin{equation*}
Z_h = \left\{ (\bv, \bB\bv, \bB\bv) \in V_h \times W_h \times W_h : \bv \in V_h \right\}.
\end{equation*}
In addition, the variational problem~\eqref{Z_Galerkin} reduces to the standard FEM formulation
\begin{equation}
\label{eg_FEM}
\intO \bD \bepsilon[\bu_h] : \bepsilon[\bv] \,d\Omega = f (\bv) \quad \forall \bv \in V_h,
\end{equation}
where $\bepsilon [\bv] = \bB \bv$.

For $\bV = (\bv, \bB \bv, \bB \bv ) \in Z_h$, one can easily verify that
\begin{equation*}
\vertiii{ \bV }_{\Pi}^2 = 3  \vertiii{ \bepsilon[\bv] }_{Y}^2 = 3 |\bV|_{\Pi}^2,
\end{equation*}
which implies that Assumption~\ref{Ass:positive} holds.
Therefore, one can obtain an error estimate for~\eqref{eg_FEM} as a simple corollary of Theorem~\ref{Thm:abstract_Galerkin} as follows.

\begin{corollary}
\label{Cor:FEM}
Let $\bu \in V$ and $\bu_h \in V_h$ solve~\eqref{weak} and~\eqref{eg_FEM}, respectively.
Then we have
\begin{equation*}
\vertiii{ \bepsilon [\bu] - \bepsilon [\bu_h] }_{Y} \leq 2 \inf_{\bv \in V_h} \vertiii{ \bepsilon [\bu] - \bepsilon [\bv] }_{Y} + C_B \left( \inf_{\btau_1 \in W_h} \vertiii{ \bsigma[\bu] - \btau_1 }_{Y^*} + \inf_{\btau_2 \in W_h} \vertiii{ \bsigma[\bu] - \btau_2 }_{Y^*}\right),
\end{equation*}
where
\begin{equation*}
\bepsilon[\bv] = \bB \bv, \gap \bsigma[\bv] = \bD \bB \bv, \quad \bv \in V_h,
\end{equation*}
and $C_B$ was defined in Assumption~\ref{Ass:positive}.
\end{corollary}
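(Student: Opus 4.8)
The plan is to read this off directly from Theorem~\ref{Thm:abstract_Galerkin} by specializing the abstract framework to the standard FEM choice $X_h = V_h$ and $Y_{1,h} = Y_{2,h} = W_h$ under the identification~\eqref{abstract_elasticity}. The hypotheses are already in hand: the computation $\vertiii{\bV}_{\Pi}^2 = 3\,\vertiii{\bepsilon[\bv]}_Y^2 = 3\,|\bV|_{\Pi}^2$ carried out just above the statement shows that $|\cdot|_{\Pi}$ is positive definite on $Z_h$, so Assumption~\ref{Ass:positive} holds and Theorem~\ref{Thm:abstract_Galerkin} applies to give
\[ |U - U_h|_{\Pi} \leq 2 \inf_{V \in Z_h} |U - V|_{\Pi} + C_B \inf_{Q \in \Delta_h} \vertiii{P - Q}_{\Delta^*} . \]
Everything that remains is to rewrite each abstract quantity in terms of the concrete energy norms $\vertiii{\cdot}_Y$ and $\vertiii{\cdot}_{Y^*}$ appearing in the corollary.

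Next I would identify the solutions. By Proposition~\ref{Prop:abstract_VP} with the setting~\eqref{abstract_elasticity}, the continuous solution is $U = (\bu, \bB\bu, \bB\bu)$ and $P = (\bsigma[\bu], \bsigma[\bu])$, while the characterization of $Z_h$ together with the reduction of~\eqref{Z_Galerkin} to~\eqref{eg_FEM} gives the discrete primal solution $U_h = (\bu_h, \bB\bu_h, \bB\bu_h)$ with $\bu_h$ solving~\eqref{eg_FEM}. Since the seminorm $|\cdot|_{\Pi}$ only reads off the third ($\bepsilon_2$) component, the left-hand side collapses to $|U - U_h|_{\Pi} = \vertiii{\bepsilon[\bu] - \bepsilon[\bu_h]}_Y$; likewise, for any $V = (\bv, \bB\bv, \bB\bv) \in Z_h$ one has $|U - V|_{\Pi} = \vertiii{\bepsilon[\bu] - \bepsilon[\bv]}_Y$, so the first infimum becomes $\inf_{\bv \in V_h} \vertiii{\bepsilon[\bu] - \bepsilon[\bv]}_Y$, matching the first term of the claimed bound.

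For the dual term I would use that $\Delta_h = W_h \times W_h$ and $P = (\bsigma[\bu], \bsigma[\bu])$: writing $Q = (\btau_1, \btau_2)$, the definition of $\vertiii{\cdot}_{\Delta^*}$ decouples the two components, so the minimization separates as
\[ \inf_{Q \in \Delta_h} \vertiii{P - Q}_{\Delta^*}^2 = \inf_{\btau_1 \in W_h} \vertiii{\bsigma[\bu] - \btau_1}_{Y^*}^2 + \inf_{\btau_2 \in W_h} \vertiii{\bsigma[\bu] - \btau_2}_{Y^*}^2 . \]
The only step that is not pure bookkeeping is then passing from this $\ell^2$-type combination to the $\ell^1$-type combination in the statement via the elementary inequality $\sqrt{a^2 + b^2} \le a + b$ for $a,b \ge 0$; substituting this, together with the two translations above, into the abstract estimate yields precisely the asserted inequality. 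I do not expect any genuine obstacle here — all of the analytic content is already contained in Theorem~\ref{Thm:abstract_Galerkin}, and this corollary is a matter of specializing the spaces, reading off the solution components, and applying one scalar inequality.
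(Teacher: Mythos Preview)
Your proposal is correct and matches the paper's approach exactly: the paper presents this corollary as a direct specialization of Theorem~\ref{Thm:abstract_Galerkin} with $X_h = V_h$, $Y_{1,h} = Y_{2,h} = W_h$, after noting that Assumption~\ref{Ass:positive} follows from $\vertiii{\bV}_{\Pi}^2 = 3|\bV|_{\Pi}^2$. The identifications of $U$, $U_h$, $P$ and the passage from the $\ell^2$- to the $\ell^1$-combination via $\sqrt{a^2+b^2}\le a+b$ are precisely the bookkeeping the paper leaves implicit.
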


\subsection{Edge-based smoothed finite element method}
Next, let $X_h = V_h$ and $Y_{1,h} = Y_{2,h} = W_{1,h}$ in~\eqref{abstract_Galerkin}, where the space $W_{1,h}$ was defined in Sect.~\ref{Sec:SSE}.1.
By a similar argument as Sect.~\ref{Sec:VP}.1, we get
\begin{equation*}
Z_h = \left\{ (\bv, P_{1,h} ( \bB \bv), P_{1,h} ( \bB \bv)) \in V_h \times W_{1,h} \times W_{1,h} :  \bv \in V_h \right\}.
\end{equation*}
In this case, the variational problem~\eqref{Z_Galerkin} becomes the following: find $\hat{\bu}_h \in V_h$ such that
\begin{equation}
\label{eg_S_FEM}
\intO \bD \hat{\bepsilon}[\hat{\bu}_h] : \hat{\bepsilon}[\bv] \,d\Omega = f (\bv) \quad \forall \bv \in V_h,
\end{equation}
where $\hat{\bepsilon}[\bv] = P_{1,h}(\bB \bv)$.
It was shown in~\cite{LNN:2010} that~\eqref{eg_S_FEM} is a formulation for the edge-based S-FEM~\cite{LNL:2009}.

In order to verify Assumption~\ref{Ass:positive} for~\eqref{eg_S_FEM}, we first observe that
\begin{equation*}
\vertiii{\bV}_{\Pi}^2 = \vertiii{ \bepsilon [\bv] }_{Y}^2 + 2 \vertiii{ \hat{\bepsilon} [\bv] }_{Y}^2, \quad |\bV|_{\Pi}^2 = \vertiii{ \hat{\bepsilon}[\bv] }_{Y}^2
\end{equation*}
for $\bV = (\bv, P_{1,h} ( \bB \bv), P_{1,h} ( \bB \bv)) \in Z_h$.
Since it was shown in~\cite[Sect.~3.9]{Liu:2010} that there exists a positive constant $C$ such that
\begin{equation*}
\vertiii{ \hat{\bepsilon} [\bv] }_{Y} \geq C \vertiii{ \bepsilon [\bv] }_{Y}, \quad \bv \in V_h,
\end{equation*}
it is clear that Assumption~\ref{Ass:positive} holds.
The following corollary summarizes the convergence property of~\eqref{eg_S_FEM}~(cf.~\cite[Theorem~1]{LNN:2010}).

\begin{corollary}
\label{Cor:S_FEM}
Let $\bu \in V$ and $\hat{\bu}_h \in V_h$ solve~\eqref{weak} and~\eqref{eg_S_FEM}, respectively.
Then we have
\begin{equation*}
\vertiii{ \bepsilon [\bu] - \hat{\bepsilon} [\hat{\bu}_h] }_{Y} \leq 2 \inf_{\bv \in V_h} \vertiii{ \bepsilon [\bu] - \hat{\bepsilon} [\bv] }_{Y} + C_B \left( \inf_{\btau_1 \in W_{1,h}} \vertiii{ \bsigma[\bu] - \btau_1 }_{Y^*} + \inf_{\btau_2 \in W_{1,h}} \vertiii{ \bsigma[\bu] - \btau_2 }_{Y^*} \right),
\end{equation*}
where 
\begin{equation*}
\bepsilon[\bv] = \bB \bv, \gap \bsigma[\bv] = \bD \bB \bv, \gap \hat{\bepsilon}[\bv] = P_{1,h}(\bB \bv), \quad \bv \in V_h,
\end{equation*}
and $C_B$ was defined in Assumption~\ref{Ass:positive}.
\end{corollary}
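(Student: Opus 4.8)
The plan is to obtain this corollary as a direct specialization of Theorem~\ref{Thm:abstract_Galerkin} to the choices $X_h = V_h$ and $Y_{1,h} = Y_{2,h} = W_{1,h}$, so that the work reduces to matching the abstract quantities $U$, $P$, and $U_h$ with their concrete elasticity counterparts and then reading off each term. Assumption~\ref{Ass:positive} has already been verified for this configuration in the discussion preceding the statement, so Theorem~\ref{Thm:abstract_Galerkin} is applicable. By Proposition~\ref{Prop:abstract_VP} under the setting~\eqref{abstract_elasticity}, the exact primal and dual solutions are $U = (\bu, \bepsilon[\bu], \bepsilon[\bu])$ and $P = (\bsigma[\bu], \bsigma[\bu])$ with $\bu$ solving~\eqref{weak}, while the characterization of $Z_h$ recorded above identifies the Galerkin primal solution as $U_h = (\hat{\bu}_h, \hat{\bepsilon}[\hat{\bu}_h], \hat{\bepsilon}[\hat{\bu}_h])$ with $\hat{\bu}_h$ solving~\eqref{eg_S_FEM}.

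Next I would evaluate the three terms in the conclusion of Theorem~\ref{Thm:abstract_Galerkin}. Since the seminorm retains only the third component, i.e.\ $|V|_{\Pi} = \vertiii{\delta_2}_Y$ for $V = (\bv, \delta_1, \delta_2)$, the left-hand side collapses to $|U - U_h|_{\Pi} = \vertiii{\bepsilon[\bu] - \hat{\bepsilon}[\hat{\bu}_h]}_Y$. Likewise, for an arbitrary element $V = (\bv, \hat{\bepsilon}[\bv], \hat{\bepsilon}[\bv]) \in Z_h$ one has $|U - V|_{\Pi} = \vertiii{\bepsilon[\bu] - \hat{\bepsilon}[\bv]}_Y$, so that $\inf_{V \in Z_h} |U - V|_{\Pi} = \inf_{\bv \in V_h} \vertiii{\bepsilon[\bu] - \hat{\bepsilon}[\bv]}_Y$. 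These two identifications reproduce the first term on the right-hand side of the claimed bound verbatim.

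The only term demanding a small extra estimate is the dual contribution. For $Q = (\btau_1, \btau_2) \in \Delta_h = W_{1,h} \times W_{1,h}$ the definition of $\vertiii{\cdot}_{\Delta^*}$ gives $\vertiii{P - Q}_{\Delta^*}^2 = \vertiii{\bsigma[\bu] - \btau_1}_{Y^*}^2 + \vertiii{\bsigma[\bu] - \btau_2}_{Y^*}^2$, whence the infimum over $\Delta_h$ decouples into separate infima over $\btau_1$ and $\btau_2$; applying the elementary bound $\sqrt{a^2 + b^2} \le a + b$ for $a,b \ge 0$ then converts the square root of the sum into the sum of the two individual infima appearing in the statement. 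This final bookkeeping step is the only genuinely non-cosmetic manipulation, yet it is entirely elementary, so I anticipate no real obstacle; the corollary is a transparent instance of the abstract convergence theorem, exactly paralleling the derivation of Corollary~\ref{Cor:FEM}.
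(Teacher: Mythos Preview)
Your proposal is correct and follows essentially the same approach as the paper: the paper gives no explicit proof for this corollary, treating it as a direct specialization of Theorem~\ref{Thm:abstract_Galerkin} once Assumption~\ref{Ass:positive} has been verified in the preceding text, and your write-up simply spells out the identifications and the elementary $\sqrt{a^2+b^2}\le a+b$ bound for the dual term in slightly more detail than the paper does.
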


\subsection{Strain-smoothed element method}
In order to derive the formulation for the SSE method~\eqref{weak_SSE} from the abstract problem~\eqref{abstract_Galerkin}, we set $X_h = V_h$, $Y_{1,h} = W_{1,h}$, and $Y_{2,h} = W_{2,h}$, where the space $W_{2,h}$ was defined in Sect.~\ref{Sec:SSE}.1.
Then the set $Z_h$ is characterized by
\begin{equation*}
Z_h = \left\{ (\bv, P_{1,h} (\bB \bv), P_{2,h} P_{1,h} (\bB \bv)) \in V_h \times W_{1,h} \times W_{2,h} :  \bv \in V_h \right\},
\end{equation*}
and~\eqref{Z_Galerkin} is reduced to~\eqref{weak_SSE}: find $\bar{\bu}_h \in V_h$ such that
\begin{equation}
\label{eg_SSE}
\intO \bD \bar{\bepsilon}[\bar{\bu}_h] : \bar{\bepsilon}[\bv] \,d\Omega = f (\bv) \quad \forall \bv \in V_h,
\end{equation}
where $\bar{\bepsilon}[\bv] = P_{2,h}P_{1,h} (\bB \bv)$.

Similar to the case of S-FEM, we have
\begin{equation*}
\vertiii{\bV}_{\Pi}^2 = \vertiii{ \bepsilon [\bv] }_{Y}^2 +  \vertiii{ \hat{\bepsilon} [\bv] }_{Y}^2 + \vertiii{ \bar{\bepsilon}[\bv] }_{Y}^2, \quad
|\bV|_{\Pi}^2 = \vertiii{ \bar{\bepsilon}[\bv] }_{Y}^2
\end{equation*}
for $\bV = (\bv, P_{1,h}(\bB \bv), P_{2,h}P_{1,h} (\bB \bv)) \in Z_h$.
Using the same argument as in~\cite[Sect.~3.9]{Liu:2010}, one can show without major difficulty that there exists a positive constant $C$ such that
\begin{equation*}
\vertiii{ \bar{\bepsilon} [\bv] }_{Y} \geq C \vertiii{ \hat{\bepsilon} [\bv] }_{Y}, \quad \bv \in V_h.
\end{equation*}
Hence, Assumption~\ref{Ass:positive} holds for~\eqref{eg_SSE}.
Finally, we have the following convergence theorem for the SSE method.

\begin{corollary}
\label{Cor:SSE}
Let $\bu \in V$ and $\bar{\bu}_h \in V_h$ solve~\eqref{weak} and~\eqref{eg_SSE}, respectively.
Then, we have
\begin{equation*}
\vertiii{ \bepsilon [\bu] - \bar{\bepsilon} [\bar{\bu}_h] }_{Y} \leq 2 \inf_{\bv \in V_h} \vertiii{ \bepsilon [\bu] - \bar{\bepsilon} [\bv] }_{Y} + C_B \left( \inf_{\btau_1 \in W_{1,h}} \vertiii{ \bsigma[\bu] - \btau_1 }_{Y^*} + \inf_{\btau_2 \in W_{2,h}} \vertiii{ \bsigma[\bu] - \btau_2 }_{Y^*}\right),
\end{equation*}
where 
\begin{equation*}
\bepsilon[\bv] = \bB \bv, \gap \bsigma[\bv] = \bD \bB \bv, \gap \hat{\bepsilon}[\bv] = P_{1,h}(\bB \bv), \gap \bar{\bepsilon}[\bv] = P_{2,h}P_{1,h}(\bB \bv), \quad \bv \in V_h,
\end{equation*}
and $C_B$ was defined in Assumption~\ref{Ass:positive}.
\end{corollary}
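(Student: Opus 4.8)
The plan is to obtain the corollary as a direct specialization of the abstract error estimate of Theorem~\ref{Thm:abstract_Galerkin} to the choice $X_h = V_h$, $Y_{1,h} = W_{1,h}$, $Y_{2,h} = W_{2,h}$ fixed above. The preceding discussion has already verified Assumption~\ref{Ass:positive} in this setting (via $\vertiii{\bar{\bepsilon}[\bv]}_Y \geq C \vertiii{\hat{\bepsilon}[\bv]}_Y$ together with \eqref{Z_positive}), so Theorem~\ref{Thm:abstract_Galerkin} applies verbatim and yields
\[
| U - U_h |_{\Pi} \leq 2 \inf_{V \in Z_h} | U - V |_{\Pi} + C_B \inf_{Q \in \Delta_h} \vertiii{P-Q}_{\Delta^*}.
\]
From here the work is purely a matter of identifying each abstract object with its elasticity counterpart and simplifying.

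First I would pin down the solutions. By Proposition~\ref{Prop:abstract_VP} (with linear elasticity as the instance in Proposition~\ref{Prop:VP}), the continuous primal and dual solutions are $U = (\bu, \bB\bu, \bB\bu) = (\bu, \bepsilon[\bu], \bepsilon[\bu])$ and $P = (\bsigma[\bu], \bsigma[\bu])$, while Theorem~\ref{Thm:VP} identifies the Galerkin primal solution as $U_h = (\bar{\bu}_h, P_{1,h}(\bB\bar{\bu}_h), P_{2,h}P_{1,h}(\bB\bar{\bu}_h)) = (\bar{\bu}_h, \hat{\bepsilon}[\bar{\bu}_h], \bar{\bepsilon}[\bar{\bu}_h]) \in Z_h$. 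The key observation is that the seminorm $|\cdot|_{\Pi}$ measures only the third ($\epsilon_2$) component, so the left-hand side collapses to $|U - U_h|_{\Pi} = \vertiii{\bepsilon[\bu] - \bar{\bepsilon}[\bar{\bu}_h]}_Y$, which is exactly the quantity to be bounded. For the first term on the right, the characterization of $Z_h$ shows that $V \in Z_h$ ranges precisely over $(\bv, \hat{\bepsilon}[\bv], \bar{\bepsilon}[\bv])$ with $\bv \in V_h$, whence $\inf_{V \in Z_h} |U - V|_{\Pi} = \inf_{\bv \in V_h} \vertiii{\bepsilon[\bu] - \bar{\bepsilon}[\bv]}_Y$, matching the stated first term.

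For the stress term I would write $Q = (\btau_1, \btau_2) \in \Delta_h = W_{1,h} \times W_{2,h}$, so that the definition of $\vertiii{\cdot}_{\Delta^*}$ gives $\vertiii{P-Q}_{\Delta^*}^2 = \vertiii{\bsigma[\bu] - \btau_1}_{Y^*}^2 + \vertiii{\bsigma[\bu] - \btau_2}_{Y^*}^2$. Since the two components decouple, the infimum over $Q$ equals the square root of the sum of the two independent infima, and the elementary inequality $\sqrt{a^2+b^2} \leq a+b$ for $a,b \geq 0$ converts this into the additive form $\inf_{\btau_1 \in W_{1,h}} \vertiii{\bsigma[\bu] - \btau_1}_{Y^*} + \inf_{\btau_2 \in W_{2,h}} \vertiii{\bsigma[\bu] - \btau_2}_{Y^*}$ appearing in the corollary, with $C_B$ the constant from Lemma~\ref{Lem:B_continuous} associated with the $\alpha$ of this $Z_h$.

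Honestly there is no serious obstacle here, since all the structural effort was invested in Theorem~\ref{Thm:abstract_Galerkin} and in the verification of Assumption~\ref{Ass:positive}. The only steps requiring genuine care are \emph{(i)} reading off that $|\cdot|_{\Pi}$ sees only the smoothed-strain slot $\bar{\bepsilon}$, so that the abstract primal error coincides with the SSE energy error in the strain, and \emph{(ii)} the passage $\sqrt{a^2+b^2}\leq a+b$ that turns the coupled dual-norm infimum into the separate infima in the statement.
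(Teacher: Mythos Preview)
Your proposal is correct and follows exactly the route the paper intends: the corollary is a direct specialization of Theorem~\ref{Thm:abstract_Galerkin} once Assumption~\ref{Ass:positive} has been checked for this choice of $Z_h$, and your identifications of $|U-U_h|_{\Pi}$, $\inf_{V\in Z_h}|U-V|_{\Pi}$, and the dual term (including the $\sqrt{a^2+b^2}\le a+b$ step) are precisely what is needed. The paper does not spell out a proof for this corollary, but the surrounding text makes clear that the derivation is exactly what you wrote.
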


A conventional explanation for the fast convergence of strain smoothing methods is that the strain smoothing procedure allows the element to have more supporting nodes than the element so that the smoothed strain can be constructed by utilizing information in a broader region.
Meanwhile, the above convergence theorems allow us to develop a more quantitative explanation of why the SSE method converges faster than the standard FEM and edge-based S-FEM.
As discussed above, all these methods are conforming Galerkin approximations of the proposed variational principle~\eqref{VP} but use different finite-dimensional subspaces for strain approximation.
More precisely, the standard FEM, edge-based S-FEM, and SSE method use $(W_h, W_h)$, $(W_{1,h}, W_{1,h})$, and $(W_{1,h}, W_{2,h})$ as finite-dimensional approximations for $W \times W$, respectively.
Corollaries~\ref{Cor:FEM},~\ref{Cor:S_FEM}, and~\ref{Cor:SSE} indicate that the convergence rates of these methods depend on the approximabilities of the discrete spaces $W_h$, $W_{1,h}$, and $W_{2,h}$ defined on the subdivisions $\cT_h$, $\cT_{1,h}$, and $\cT_{2,h}$, respectively, for the continuous space $W$.
As depicted in Fig.~\ref{Fig:subdivision}, $\cT_{2,h}$ is a refinement of $\cT_h$, so that $W_h \subset W_{2,h}$.
Hence, we have
\begin{equation*}
\inf_{\btau \in W_{2,h}} \vertiii{ \bsigma - \btau }_{Y^*} \leq \inf_{\btau \in W_{h}} \vertiii{ \bsigma - \btau }_{Y^*}, \quad \bsigma \in W,
\end{equation*}
i.e., $W_{2,h}$ approximates $W$ always better than $W_h$.
Next, by comparing the two subdivisions $\cT_{1,h}$ and $\cT_{2,h}$, we observe that the characteristic mesh size of $\cT_{2,h}$ is smaller than that of $\cT_{1,h}$.
Moreover, $\cT_{2,h}$ has better shape-regularity than $\cT_{1,h}$ in general; see~\cite[Definition~4.4.13]{BS:2008} for the definition of the shape-regularity.
As the approximability of a mesh depends on the characteristic mesh size and the shape-regularity~\cite{BS:2008}, we can expect that $W_{2,h}$ approximates $W$ better than $W_{1,h}$.
Comparing Corollaries~\ref{Cor:FEM},~\ref{Cor:S_FEM}, and~\ref{Cor:SSE}, we conclude that the discretization error of the SSE method is less than that of the others.
In the next section, we will present numerical results that verify the superior approximability of $W_{2,h}$ compared to $W_h$ and $W_{1,h}$.

We conclude this section by presenting a convergence rate analysis of the SSE method.
As shown in Theorem~\ref{Thm:rate}, the $O(h)$ convergence of the strain error is guaranteed for the SSE method.

\begin{theorem}
\label{Thm:rate}
Let $\bu \in V$ and $\bar{\bu}_h \in V_h$ solve~\eqref{weak} and~\eqref{eg_SSE}, respectively.
Under the assumption $\bu \in (H^2 (\Omega))^2$, we have
\begin{equation*}
\vertiii{ \bepsilon [\bu] - \bar{\bepsilon} [\bar{\bu}_h] }_{Y} \leq Ch,
\end{equation*}
where 
\begin{equation*}
\bepsilon[\bv] = \bB \bv, \gap \gap \bar{\bepsilon}[\bv] = P_{2,h}P_{1,h}(\bB \bv), \quad \bv \in V_h,
\end{equation*}
and $C$ is a positive constant independent of $h$.
\end{theorem}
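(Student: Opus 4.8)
The plan is to apply Corollary~\ref{Cor:SSE} and bound each of the three infima on its right-hand side by $O(h)$ using standard finite element approximation theory. Since $\bu \in (H^2(\Omega))^2$, the strain $\bepsilon[\bu] = \bB\bu$ and the stress $\bsigma[\bu] = \bD\bB\bu$ both belong to $(H^1(\Omega))^3$, which is exactly the regularity needed to obtain first-order $L^2$-approximation by piecewise constants on quasi-uniform meshes.

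First I would treat the two stress terms. For the term $\inf_{\btau_1 \in W_{1,h}} \vertiii{\bsigma[\bu] - \btau_1}_{Y^*}$, I would choose $\btau_1 = P_{1,h}\bsigma[\bu]$, the $(L^2(\Omega))^3$-orthogonal projection onto $W_{1,h}$ guaranteed by Lemma~\ref{Lem:orthogonal}. Since $\vertiii{\cdot}_{Y^*}$ is equivalent to the $(L^2(\Omega))^3$-norm (because $\bD$ is symmetric positive definite and constant), the error $\vertiii{\bsigma[\bu] - P_{1,h}\bsigma[\bu]}_{Y^*}$ is controlled, up to the spectral constants of $\bD$, by the $L^2$ best-approximation error of an $H^1$ function by piecewise constants on $\cT_{1,h}$, which is $O(h)$ by the Bramble--Hilbert lemma on the quasi-uniform mesh $\cT_{1,h}$~\cite[Chapter~4]{BS:2008}. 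The term over $W_{2,h}$ is handled identically using $\btau_2 = P_{2,h}\bsigma[\bu]$ on $\cT_{2,h}$.

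The strain term $\inf_{\bv \in V_h}\vertiii{\bepsilon[\bu] - \bar{\bepsilon}[\bv]}_Y$ with $\bar{\bepsilon}[\bv] = P_{2,h}P_{1,h}(\bB\bv)$ is the delicate one. I would take $\bv = I_h \bu$, the standard nodal interpolant of $\bu$ in $V_h$, and split via the triangle inequality:
\begin{equation*}
\vertiii{\bepsilon[\bu] - \bar{\bepsilon}[I_h\bu]}_Y \leq \vertiii{\bepsilon[\bu] - P_{2,h}P_{1,h}\bepsilon[\bu]}_Y + \vertiii{P_{2,h}P_{1,h}(\bepsilon[\bu] - \bepsilon[I_h\bu])}_Y.
\end{equation*}
For the second summand, each $P_{k,h}$ is an $(L^2(\Omega))^3$-orthogonal projection and hence is nonexpansive in the $\vertiii{\cdot}_Y$ norm as well (using that $P_{k,h}$ commutes with $\bD$ by Lemma~\ref{Lem:commute}), so it is bounded by $\vertiii{\bepsilon[\bu] - \bepsilon[I_h\bu]}_Y = O(h)$ from the standard interpolation estimate. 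For the first summand, I would write $P_{2,h}P_{1,h}\bepsilon[\bu] - \bepsilon[\bu] = (P_{2,h} - I)P_{1,h}\bepsilon[\bu] + (P_{1,h} - I)\bepsilon[\bu]$; the second piece is the $O(h)$ piecewise-constant projection error on $\cT_{1,h}$ as above, and for the first piece I would use the $H^1$-stability of $P_{1,h}$ together with the first-order approximation property of $P_{2,h}$ on $\cT_{2,h}$.

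The main obstacle will be establishing that $P_{2,h}P_{1,h}\bepsilon[\bu]$ retains the $O(h)$ approximation rate, because $P_{1,h}\bepsilon[\bu]$ is only piecewise constant on $\cT_{1,h}$ and thus not globally $H^1$, so one cannot naively invoke a Bramble--Hilbert estimate for $(P_{2,h}-I)P_{1,h}\bepsilon[\bu]$. I expect to circumvent this by replacing the insertion point: rather than projecting $P_{1,h}\bepsilon[\bu]$, I would insert the smooth function $\bepsilon[\bu]$ inside and use the commuting/idempotency structure of the orthogonal projections, so that $(P_{2,h}-I)P_{1,h}\bepsilon[\bu]$ is bounded by $\vertiii{(P_{2,h}-I)\bepsilon[\bu]}_Y$ plus a composition of projection errors, each of which is a genuine piecewise-constant best-approximation error of the $H^1$ function $\bepsilon[\bu]$ and therefore $O(h)$. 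Collecting the three $O(h)$ bounds and absorbing the fixed constants $C_B$, the spectral bounds of $\bD$, and the shape-regularity constants of the quasi-uniform meshes $\cT_h$, $\cT_{1,h}$, $\cT_{2,h}$ into a single constant $C$ independent of $h$ completes the proof.
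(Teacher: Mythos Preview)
Your proof is correct and follows the same overall strategy as the paper: invoke Corollary~\ref{Cor:SSE} and bound each infimum by $O(h)$ using standard polynomial approximation of the $H^1$ fields $\bepsilon[\bu]$ and $\bsigma[\bu]$ on the quasi-uniform meshes $\cT_h$, $\cT_{1,h}$, $\cT_{2,h}$.

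The only point worth noting is your telescoping of the strain term. You split $\bepsilon[\bu]-P_{2,h}P_{1,h}\bepsilon[\bu]$ as $(I-P_{1,h})\bepsilon[\bu]+(I-P_{2,h})P_{1,h}\bepsilon[\bu]$, which creates the ``obstacle'' you then have to work around (the argument of $I-P_{2,h}$ is no longer in $H^1$). The paper telescopes in the opposite order,
\[
\bepsilon[\bu]-P_{2,h}P_{1,h}\bepsilon[\bv^{\dag}]
= (I-P_{2,h})\bepsilon[\bu] + P_{2,h}(I-P_{1,h})\bepsilon[\bu] + P_{2,h}P_{1,h}\bigl(\bepsilon[\bu]-\bepsilon[\bv^{\dag}]\bigr),
\]
so that the outer operator in each composed term is the contraction $P_{2,h}$ (or $P_{2,h}P_{1,h}$) rather than $I-P_{2,h}$, and every summand is immediately a best-approximation error of the smooth function $\bepsilon[\bu]$. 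Your workaround, writing $(P_{2,h}-I)P_{1,h}\bepsilon[\bu]=(P_{2,h}-I)(P_{1,h}-I)\bepsilon[\bu]+(P_{2,h}-I)\bepsilon[\bu]$ and using that $I-P_{2,h}$ is itself an orthogonal projection (hence a contraction), is valid and yields the same bound with one extra step. The paper also uses the abstract minimizer $\bv^{\dag}$ in place of your nodal interpolant $I_h\bu$, but this is immaterial.
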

\begin{proof}
Throughout this proof, for two positive real numbers $A$ and $B$ depending on the parameter $h$, we write $A \lesssim B$ if there exists a positive constant $C$ independent of $h$ such that $A \leq C B$.
Since $\bepsilon [V_h]$ is a closed subspace of $W$, by~\cite[Theorem~1.7]{Teschl:2009}, there exists $\bv^{\dag} \in V_h$ such that
\begin{equation}
\label{v_proj}
\inf_{\bv \in V_h} \vertiii{\bepsilon [\bu] - \bepsilon [\bv]}_Y = \vertiii{\bepsilon [\bu] - \bepsilon [\bv^{\dag}]}_Y.
\end{equation}
Using the triangular inequality and~\eqref{v_proj}, we have
\begin{equation} \begin{split}
\label{term_splitting}
\vertiii{\bepsilon [\bu] - \bar{\bepsilon} [\bv^{\dag}]}_Y &\leq 
\vertiii{\bepsilon [\bu] - P_{2,h}\bepsilon [\bu]}_Y + 
\vertiii{P_{2,h} \left( \bepsilon [\bu] - P_{1,h} \bepsilon [\bu] \right)}_Y + 
\vertiii{P_{2,h} P_{1,h} \left( \bepsilon [\bu] - \bepsilon [\bv^{\dag}] \right)}_Y \\
&\leq \vertiii{\bepsilon [\bu] - P_{2,h}\bepsilon [\bu]}_Y + 
\vertiii{\bepsilon [\bu] - P_{1,h} \bepsilon [\bu] }_Y + 
\vertiii{\bepsilon [\bu] - \bepsilon [\bv^{\dag}] }_Y \\
&= \inf_{\bdelta_2 \in W_{2,h}} \vertiii{\bepsilon[\bu] - \bdelta_2}_Y
+ \inf_{\bdelta_1 \in W_{1,h}} \vertiii{\bepsilon[\bu] - \bdelta_1}_Y
+ \inf_{\bv \in V_h} \vertiii{\bepsilon [\bu] - \bepsilon [\bv]}_Y.
\end{split}
\end{equation}
Meanwhile, we have $\bepsilon [\bu]$ and $\bsigma [\bu] = \bD \bB \bu$ belong to $(H^1 (\Omega))^3$ since $\bu \in (H^2 (\Omega)^2$.
Recalling some standard results from the polynomial approximation theory in Sobolev spaces~\cite[Chapter~4]{BS:2008}, we get
\begin{equation} \begin{split}
\label{estimates}
\inf_{\bv \in V_h} \vertiii{\bepsilon [\bu] - \bepsilon [\bv]}_Y \lesssim h, \quad
\inf_{\bdelta_1 \in W_{1,h}} \vertiii{\bepsilon [\bu] - \bdelta_1}_Y \lesssim h, \quad
\inf_{\bdelta_2 \in W_{2,h}} \vertiii{\bepsilon [\bu] - \bdelta_2}_Y \lesssim h, \\
\inf_{\btau_1 \in W_{1,h}} \vertiii{\bsigma [\bu] - \btau_1}_{Y^*} \lesssim h, \quad
\inf_{\btau_2 \in W_{2,h}} \vertiii{\bsigma [\bu] - \btau_2}_{Y^*} \lesssim h.
\end{split} \end{equation}
Combining Corollary~\ref{Cor:SSE}, \eqref{term_splitting}, and~\eqref{estimates} yields the desired result.
\end{proof}

\section{Numerical experiments}
\label{Sec:Numerical}
In this section, numerical experiments are conducted to support the theoretical results presented in the previous sections.  
The strain-smoothed elements pass three basic numerical tests: the zero energy mode, isotropic element, and patch tests; see~\cite{Bathe:1996,LL:2018,LKL:2021}.

\begin{figure}[]
\centering
\includegraphics[width=0.8\textwidth]{.//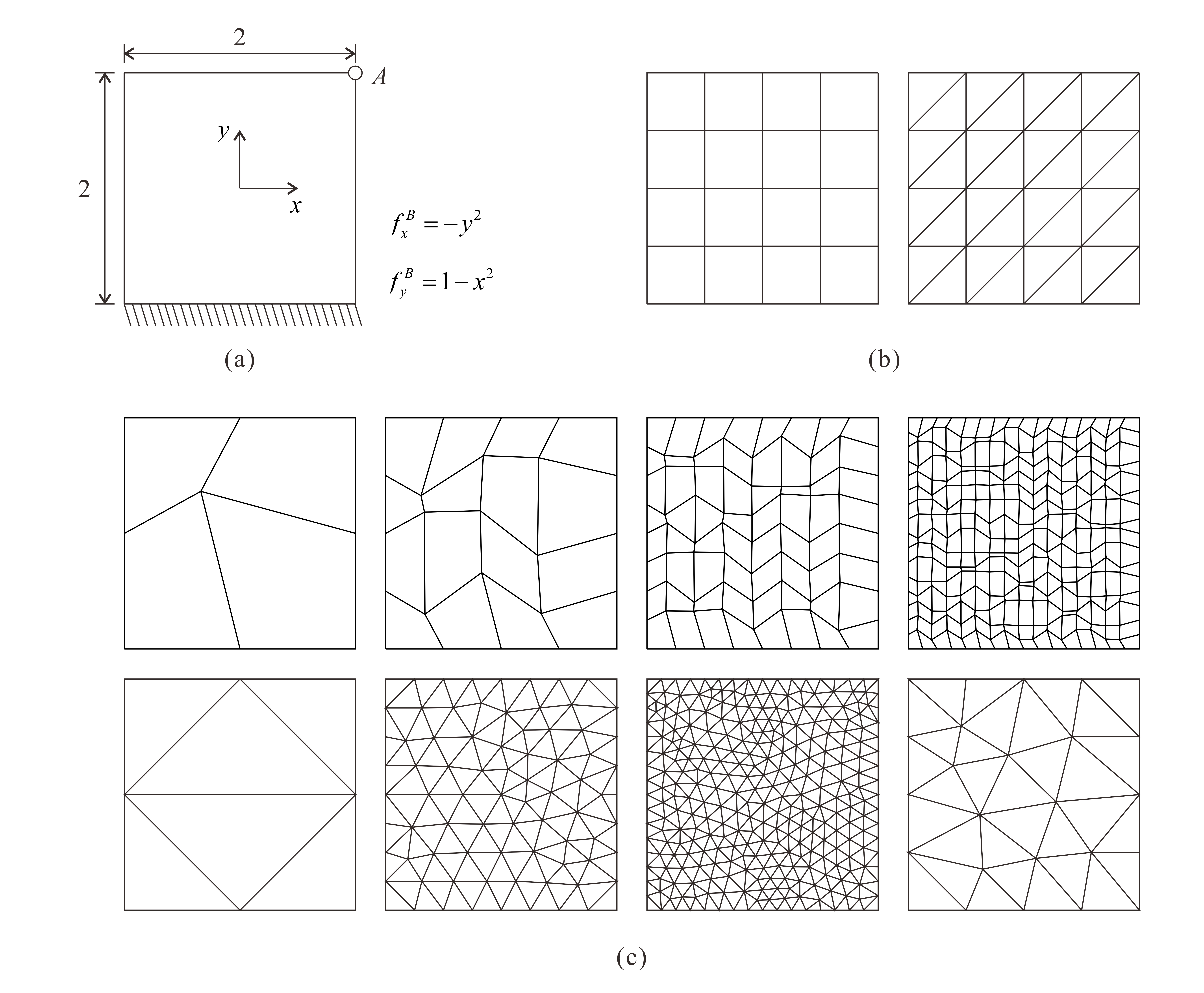}
\caption{The block problem: (a) Problem description (plane stress condition, $E=1 \times 10^3$ and $\nu = 0.2$). (b) Regular meshes of triangular and quadrilateral elements when $N=4$. (c) Distorted meshes of triangular and quadrilateral elements.}
\label{Fig:block}
\end{figure}

We consider the simple block problem shown in Fig.~\ref{Fig:block}. 
The block is subjected to body forces $f^B_x = -y^2$ and $f^B_y = 1-x^2$, and the clamped boundary condition is applied along the bottom edge.  
The plane stress condition is assumed, and the material properties are given as Young's modulus $E = 1 \times 10^3$ and Poisson's ratio $\nu = 0.2$.

For domain discretization, we use regular meshes of $N \times N$ triangular and quadrilateral elements~($N=2$, $4$, $8$, and $16$) shown in Fig.~\ref{Fig:block}(b).
In addition, distorted meshes of quadrilateral elements are constructed by repositioning the internal nodes of the regular meshes.
The distorted meshes of triangular elements are obtained using the commercial software ANSYS with the total number of elements $N_{e}=6$, $32$, $128$, and $500$, as shown in Fig.~\ref{Fig:block}(c).

First, the approximabilities of $W_{h}$, $W_{1,h}$, and $W_{2,h}$ are compared by measuring the discretization errors
\begin{equation}
 \label{proj_error}
 \inf_{\bdelta \in S_h} \vertiii{\bepsilon_{\mathrm{ref}} - \bdelta}
 = \vertiii{\bepsilon_{\mathrm{ref}} - \proj_{S_h} \bepsilon_{\mathrm{ref}}}, \quad S_h = W_h, W_{1,h}, W_{2,h},
\end{equation}
that occur when projecting the reference strain $\bepsilon_{\mathrm{ref}} \in W$, where the subscript ``ref'' denotes the reference finite element solution.
A $64 \times 64$ mesh of 9-node quadrilateral elements is used to calculate the reference strain.
Tables~\ref{Table:T3} and~\ref{Table:Q4} provide the discretization errors for the triangualr and quadrilateral meshes, respectively.
The approximabilities of $W_h$ and $W_{1,h}$ seem comparable to each other.
This is because the approximability of a mesh relies on both the mesh size and the shape of the mesh; while $\cT_{1,h}$ has a relatively small mesh size than $\cT_h$, its shape-regularity is worse owing to its longish shape.
On the other hand, as expected in Sect.~\ref{Sec:Convergence}, one can observe that $W_{2,h}$ exhibits the highest accuracy under all conditions.
This observation supports our claim that the SSE method is more convergent than other algorithms.

\begin{table} \centering
\begin{tabular}{c c c c c} \hline
{} & $N \times N$ (or $N_{e}$) & $W_{h}$ & $W_{1,h}$ & $W_{2,h}$ \\
\hline
\multirow{4}{*}{\begin{tabular}{c}Regular\\mesh\end{tabular}}
& $2 \times 2$ & $1.538 \times 10^{-3}$ & $1.182 \times 10^{-3}$ & $9.545 \times 10^{-4}$ \\
& $4 \times 4$ & $9.042 \times 10^{-4}$ & $6.943 \times 10^{-4}$ & $5.080 \times 10^{-4}$ \\
& $8 \times 8$ & $4.742 \times 10^{-4}$ & $3.719 \times 10^{-4}$ & $2.601 \times 10^{-4}$ \\
& $16 \times 16$ & $2.416 \times 10^{-4}$ & $1.917 \times 10^{-4}$ & $1.313 \times 10^{-4}$ \\
\hline
\multirow{4}{*}{\begin{tabular}{c}Distorted\\mesh\end{tabular}}
& 6 & $1.734 \times 10^{-3}$ & $1.608 \times 10^{-3}$ & $1.167 \times 10^{-3}$ \\
& 32 & $9.535 \times 10^{-4}$ & $9.273 \times 10^{-4}$ & $5.314 \times 10^{-4}$ \\
& 128 & $4.914 \times 10^{-4}$ & $4.888 \times 10^{-4}$ & $2.694 \times 10^{-4}$ \\
& 500 & $2.536 \times 10^{-4}$ & $2.588 \times 10^{-4}$ & $1.379 \times 10^{-4}$ \\
\hline
\end{tabular}
\caption{Discretization errors~\eqref{proj_error} for the clamped block problem when using regular and distorted triangular meshes.}
\label{Table:T3}
\end{table}

\begin{table} \centering
\begin{tabular}{c c c c c} \hline
{} & $N \times N$ & $W_{h}$ & $W_{1,h}$ & $W_{2,h}$ \\
\hline
\multirow{4}{*}{\begin{tabular}{c}Regular\\mesh\end{tabular}}
& $2 \times 2$ & $2.009 \times 10^{-3}$ & $1.818 \times 10^{-3}$ & $1.174 \times 10^{-3}$ \\
& $4 \times 4$ & $1.174 \times 10^{-3}$ & $1.208 \times 10^{-3}$ & $6.189 \times 10^{-4}$ \\
& $8 \times 8$ & $6.189 \times 10^{-4}$ & $7.027 \times 10^{-4}$ & $3.169 \times 10^{-4}$ \\
& $16 \times 16$ & $3.169 \times 10^{-4}$ & $3.819 \times 10^{-4}$ & $1.595 \times 10^{-4}$ \\
\hline
\multirow{4}{*}{\begin{tabular}{c}Distorted\\mesh\end{tabular}}
& $2 \times 2$ & $1.842 \times 10^{-3}$ & $1.809 \times 10^{-3}$ & $1.170 \times 10^{-3}$ \\
& $4 \times 4$ & $1.187 \times 10^{-3}$ & $1.381 \times 10^{-3}$ & $6.436 \times 10^{-4}$ \\
& $8 \times 8$ & $6.759 \times 10^{-4}$ & $9.877 \times 10^{-4}$ & $3.510 \times 10^{-4}$ \\
& $16 \times 16$ & $3.463 \times 10^{-4}$ & $8.912 \times 10^{-4}$ & $1.768 \times 10^{-4}$ \\
\hline
\end{tabular}
\caption{Discretization errors~\eqref{proj_error} for the clamped block problem when using regular and distorted quadrilateral meshes.}
\label{Table:Q4}
\end{table}

We demonstrate the accuracy and convergence behavior of the strain-smoothed 3-node triangular element~(SSE T3) and 4-node quadrilateral element~(SSE Q4). 
Three 3-node triangular elements and four 4-node quadrilateral elements are considered for comparison: the standard 3-node element~(FEM T3), the node-based 3-node element~(NS-FEM T3), the edge-based 3-node element~(ES-FEM T3), the 4-node element using piecewise linear shape functions~(FEM PL-Q4), the 4-node element using bilinear shape functions~(FEM BL-Q4), the cell-based 4-node element subdividing the element into four quadrilateral smoothing cells~(CS-FEM Q4), and the edge-based 4-node element~(ES-FEM Q4).
Note that the FEM PL-Q4 element, not the FEM BL-Q4 element, corresponds to the standard finite element method described in Sect.~\ref{Subsec:StdFEM}.

In the following, we write
\begin{equation*}
\bepsilon_h = \begin{cases} \bepsilon[ \bu_h]  & \textrm{ for FEM,} \\
\hat{\bepsilon} [\hat{\bu}_h] & \textrm{ for ES-FEM,} \\
\bar{\bepsilon} [\bar{\bu}_h] & \textrm{ for SSE;}
\end{cases}
\end{equation*}
see Corollaries~\ref{Cor:FEM}--\ref{Cor:SSE} for the notations.

To evaluate the accuracy and convergence behavior of the numerical solutions, we use the relative error in the strain energy norm:
\begin{equation}
\label{Ee}
E_e = \frac{\vertiii{\bepsilon_h - \bepsilon_{\mathrm{ref}}}_Y}{\vertiii{\bepsilon_{\mathrm{ref}}}_Y}.
\end{equation}

\begin{figure}[]
\centering
\includegraphics[width=0.9\textwidth]{.//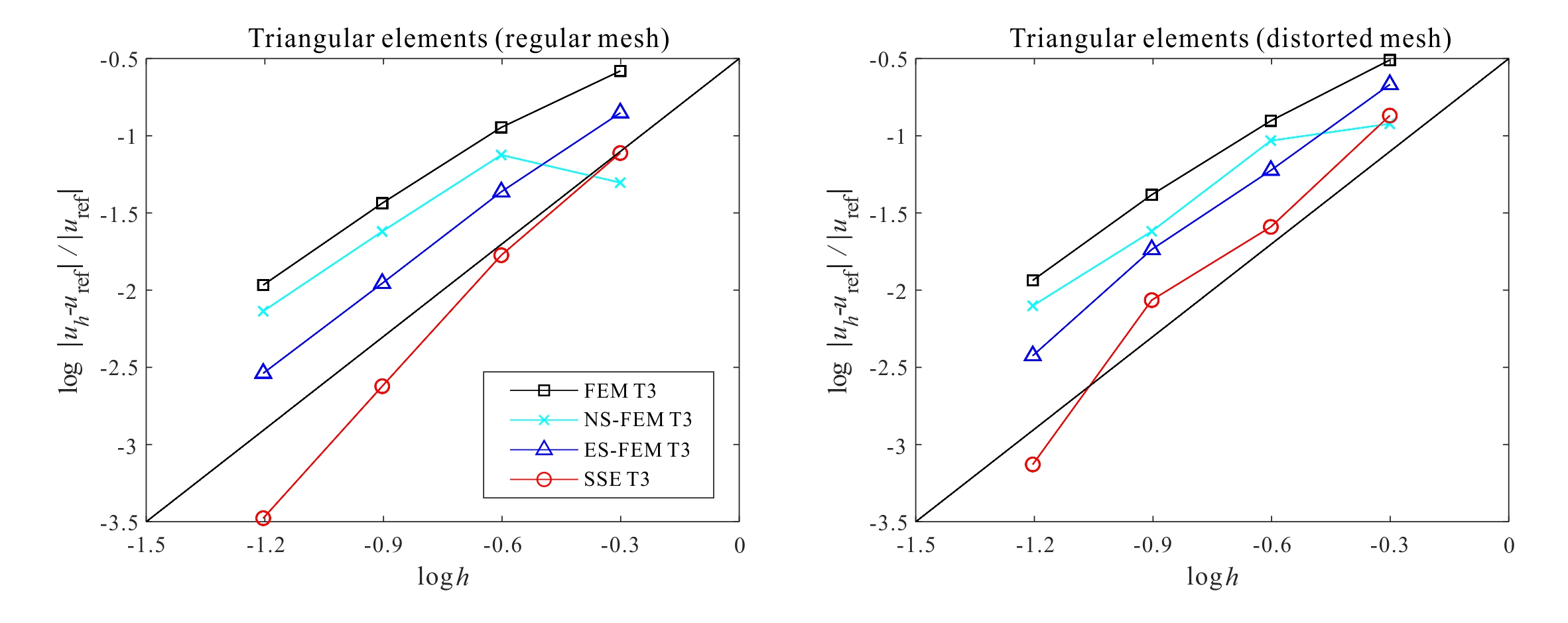}
\caption{Convergence curves for the relative error in the horizontal displacement at point $A$ for the triangular elements.}
\label{Fig:block_disp_t3}
\end{figure}

\begin{figure}[]
\centering
\includegraphics[width=0.9\textwidth]{.//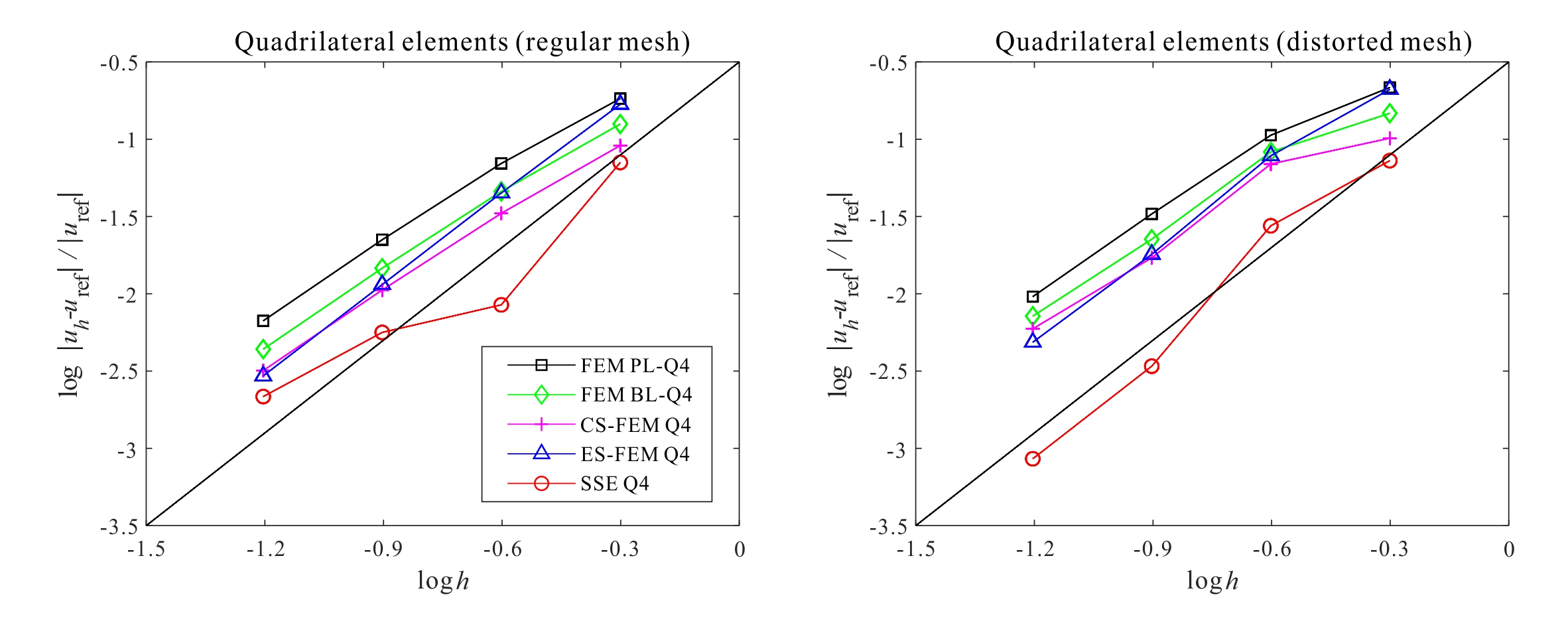}
\caption{Convergence curves for the relative error in the horizontal displacement at point $A$ for the quadrilateral elements.}
\label{Fig:block_disp_q4}
\end{figure}

\begin{figure}[]
\centering
\includegraphics[width=0.9\textwidth]{.//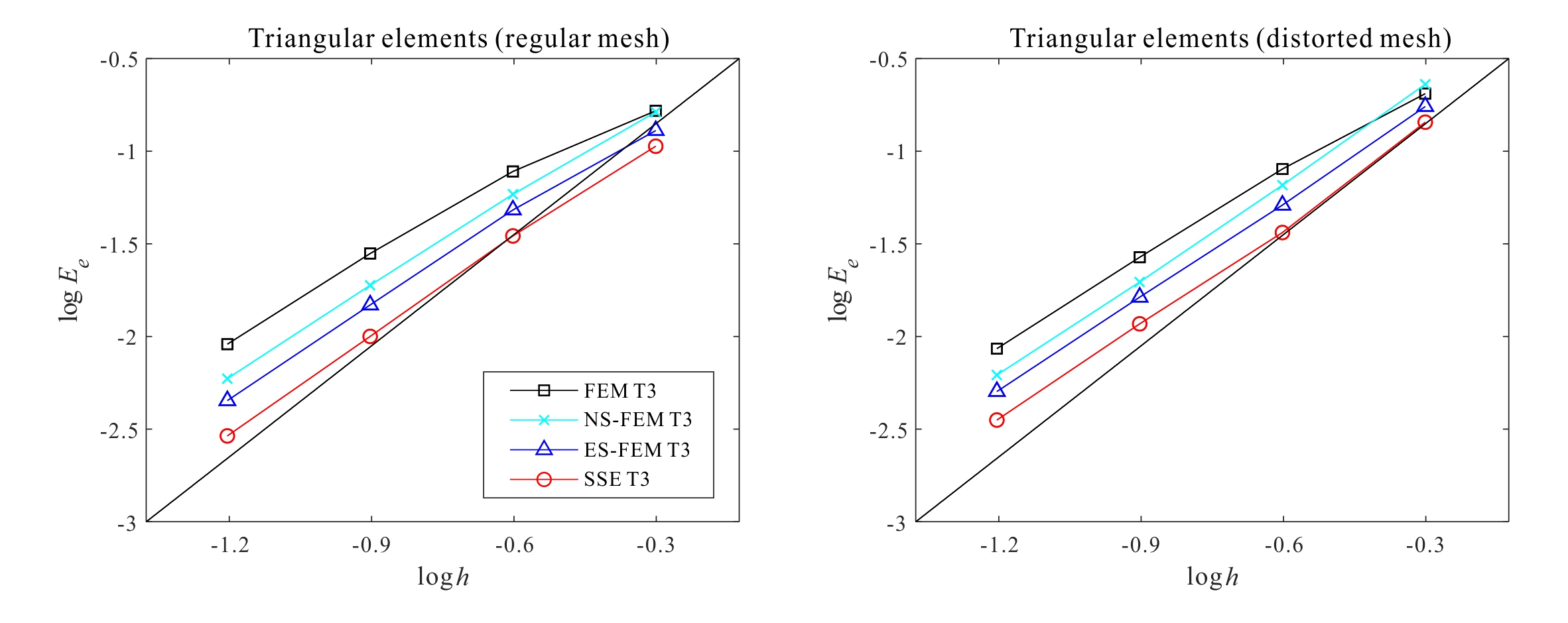}
\caption{Convergence curves for the relative error in energy norm~\eqref{Ee} for the triangular elements. The diagonal line denotes the optimal convergence rate.}
\label{Fig:block_norm_t3}
\end{figure}

\begin{figure}[]
\centering
\includegraphics[width=0.9\textwidth]{.//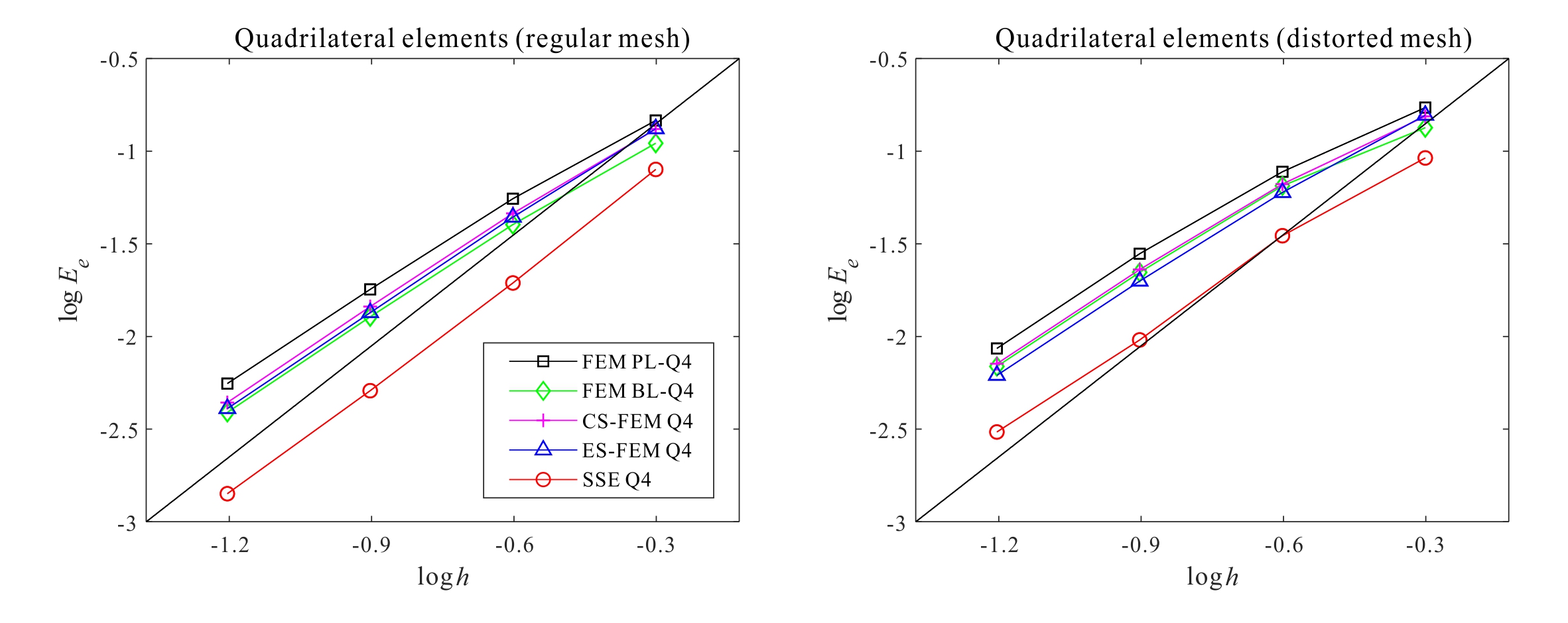}
\caption{Convergence curves for the relative error in energy norm~\eqref{Ee} for the quadrilateral elements. The diagonal line denotes the optimal convergence rate.}
\label{Fig:block_norm_q4}
\end{figure}

Figs.~\ref{Fig:block_disp_t3} and ~\ref{Fig:block_disp_q4} show the convergence curves for the relative error in the horizontal displacement at point $A$ for the triangular and quadrilateral elements, respectively. 
Figs.~\ref{Fig:block_norm_t3} and ~\ref{Fig:block_norm_q4} illustrate the convergence curves obtained using the relative error in energy norm~\eqref{Ee} for the triangular and quadrilateral elements, respectively. 
In the convergence curves, the element size $h$ is defined as $h=1/N$. The equivalent values of $N$ for the distorted meshes of the triangular elements are obtained by $N=\sqrt{N_{e}/2}$.
The reference solution is obtained using a $64 \times 64$ mesh of 9-node quadrilateral elements.
The results for the triangular elements show that the SSE T3 element shows the best accuracy, followed by the ES-FEM T3 and NS-FEM T3 elements, and the FEM T3 element provides the lowest accuracy.
The SSE Q4 element also shows much better convergence behavior compared with the other quadrilateral elements in this problem.
This well supports the theoretical investigations presented in Sect.~\ref{Sec:Convergence}. 

In~\cite{LL:2018,LL:2019,LKL:2021}, the performance of the elements adopting the SSE method was verified using various numerical examples. The accuracy was evaluated by comparing the displacement, strain, and stress parameters, and the computational efficiency was estimated. The studies also considered the effect of mesh distortion. The geometric nonlinear extension of the strain-smoothed elements is discussed in~\cite{LKL:2021}.

\section{Conclusion}
\label{Sec:Conclusion}
In this paper, we presented a novel mixed variational principle that provides a unified view of the standard FEM, the S-FEM, and the SSE method.
The proposed variational principle naturally generalizes the Hu--Washizu variational principle, and the SSE method can be derived as a conforming Galerkin approximation of the proposed variational principle.
Therefore, invoking the standard theory of mixed FEMs yielded a unified convergence analysis for the SSE method and other existing FEMs with strain smoothing.
In addition, our analysis explains why the SSE method demonstrates improved performance compared to other methods.
Our theoretical results on the improved performance of the SSE method were verified through numerical experiments.

There are a few interesting topics for future works.
Although the convergence of the SSE method was guaranteed by Corollary~\ref{Cor:SSE}, a sharp and rigorous estimate of the convergence rate of the method remains open.
We also note that generalizing the proposed variational principle to apply the SSE method in three dimensions is not straightforward~\cite[Section~3]{LL:2018}.
Finally, SSE methods for nonlinear elliptic partial differential equations and their corresponding variational principles will be considered in future research.

\section*{Acknowledgement}
This work was initially started with the help of Professor Phill-Seung Lee through a meeting on the mathematical background of the SSE method.
The authors would like to thank him for his insightful comments and assistance.

\appendix
\section{Abstract convergence theory of nonconforming finite element methods}
\label{App:FEM}
In this appendix, we present an abstract convergence theory of nonconforming Galerkin methods.
Let $H$ be a Hilbert space and let $V$ and $V_h$ be subspaces of $H$ such that $V_h \not\subset V$.
Assume that $|\cdot|_H$ is a seminorm on $H$ such that $|\cdot|_H$ is positive definite on $V \cup V_h$, i.e.,
\begin{equation*}
|u|_H > 0, \quad u \in (V \cup V_h)\setminus \{ 0 \}.
\end{equation*}
Let $a(\cdot, \cdot)$:~$H \times H \rightarrow \mathbb{R}$ be a blinear form on $H$ which is continuous and coercive with respect to $|\cdot|_H$, i.e., there exist two positive constants $C$ and $\alpha$ satisfying
\begin{align*}
a(u,v) &\leq C | u |_H | v |_H,\\
a(u,u) &\geq \alpha |u|_H^2
\end{align*}
for $u,v\in H$.
In Theorem~\ref{Thm:nonconforming}, we present an error estimate for the variational problem
\begin{equation}
\label{App:weak}
a(u,v) = f(v), \quad v\in V
\end{equation}
with respect to its nonconforming Galerkin approximation
\begin{equation}
\label{App:Galerkin}
a(u_h, v) = f(v), \quad v \in V_h,
\end{equation}
where $f \in H^*$.

\begin{theorem}
\label{Thm:nonconforming}
Let $u \in V$ and $u_h \in V_h$ solve~\eqref{App:weak} and~\eqref{App:Galerkin}, respectively.
Then we have
\begin{equation*}
| u - u_h |_H \leq \left( 1+ \frac{C}{\alpha} \right) \inf_{v \in V_h} |u-v|_H + \frac{1}{\alpha} \sup_{w \in V_h \setminus \{ 0\}} \frac{|a(u-u_h , w)|}{|w|_H}.
\end{equation*}
\end{theorem}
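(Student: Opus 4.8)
The plan is to establish this estimate through a coercivity argument combined with the triangle inequality---essentially a variant of the second Strang lemma. The central difficulty is the nonconformity $V_h \not\subset V$: because the error $u - u_h$ need not belong to $V_h$, I cannot subtract the two variational equations~\eqref{App:weak} and~\eqref{App:Galerkin} to obtain Galerkin orthogonality and thereby eliminate the cross term. Instead, this loss of orthogonality is exactly what the supremum (consistency) term on the right-hand side is designed to capture, so the strategy is not to try to kill that term but to isolate it cleanly.

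First I would fix an arbitrary $v \in V_h$ and introduce $w = u_h - v \in V_h$. If $w = 0$ the claimed bound follows at once from $|u - u_h|_H \le |u - v|_H$, so I may assume $w \neq 0$; here the positive-definiteness of $|\cdot|_H$ on $V \cup V_h$ guarantees $|w|_H > 0$ and legitimizes dividing by it later. By coercivity, $\alpha |w|_H^2 \le a(w, w) = a(u_h - v, w)$.

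The key step is then to insert and remove the continuous solution $u$, writing $a(u_h - v, w) = a(u - v, w) - a(u - u_h, w)$. The first term is controlled by continuity, $a(u - v, w) \le C |u - v|_H |w|_H$, while the second is bounded by $|a(u - u_h, w)| \le |w|_H \sup_{w' \in V_h \setminus \{0\}} |a(u - u_h, w')|/|w'|_H$. Dividing the resulting inequality by $\alpha |w|_H = \alpha |u_h - v|_H$ yields
\begin{equation*}
|u_h - v|_H \le \frac{C}{\alpha}\, |u - v|_H + \frac{1}{\alpha} \sup_{w' \in V_h \setminus \{0\}} \frac{|a(u - u_h, w')|}{|w'|_H}.
\end{equation*}

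Finally I would apply the triangle inequality $|u - u_h|_H \le |u - v|_H + |u_h - v|_H$, absorb the two $|u - v|_H$ contributions into the single factor $(1 + C/\alpha)$, and take the infimum over all $v \in V_h$ to reach the stated bound. I do not anticipate a genuine obstacle here beyond the conceptual point already noted---the proof is short and routine once the inconsistency is correctly localized into the supremum term; the only care required is the harmless case distinction $w = 0$ and the appeal to positive-definiteness of $|\cdot|_H$ on $V \cup V_h$ to ensure the coercivity estimate meaningfully bounds $|w|_H$.
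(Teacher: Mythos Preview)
Your proof is correct and is precisely the standard second Strang lemma argument from~\cite[Lemma~10.1.1]{BS:2008} that the paper itself invokes without spelling out. The case distinction $w=0$ and the remark on positive-definiteness of $|\cdot|_H$ on $V\cup V_h$ are appropriate minor adaptations to the seminorm setting.
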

\begin{proof}
One can easily obtain the desired result by following the argument in~\cite[Lemma~10.1.1]{BS:2008}.
\end{proof}

Note that Theorem~\ref{Thm:nonconforming} is written in terms of seminorm $|\cdot|_H$ while the existing standard results~(see, e.g.,~\cite{BS:2008,Ciarlet:2002}) are written in terms of norm.
In this sense, Theorem~\ref{Thm:nonconforming} is a generalization of the standard results. 

\bibliographystyle{elsarticle-num-names} 
\bibliography{refs_SSE}

\end{document}